\begin{document}
\title{Low regularity solutions of two fifth-order KdV type equations}
\author{ Wengu Chen$^1$, Junfeng Li$^{2}$, Changxing Miao$^1$ and Jiahong Wu$^3$
\\$^1$Institute of Applied Physics
and Computational Mathematics\\P.O.Box 8009, Beijing 100088,
China\\$^2$College of Mathematics, Beijing Normal University
\\Beijing 100875, China\\$^3$Department of Mathematics, Oklahoma State
University\\
Stillwater, OK 74078}

\date{E-mail:\,chenwg@iapcm.ac.cn,\, junfli@yahoo.com.cn,\, miao$_-$changxing@iapcm.ac.cn,\,
jiahong@math.okstate.edu}

\maketitle

\newtheorem{thm}{Theorem}[section]
\newtheorem{thmA}[thm]{Theorem A}
\newtheorem{cor}[thm]{Corollary}
\newtheorem{prop}[thm]{Proposition}
\newtheorem{define}[thm]{Definition}
\newtheorem{rem}[thm]{Remark}
\newtheorem{example}[thm]{Example}
\newtheorem{lemma}[thm]{Lemma}
\def\theequation{\thesection.\arabic{equation}}

\vspace{.15in} \noindent {\bf Abstract:} The Kawahara and modified
Kawahara equations are fifth-order KdV type equations and have been
derived to model many physical phenomena such as gravity-capillary
waves and magneto-sound propagation in plasmas. This paper
establishes the local well-posedness of the initial-value problem
for Kawahara equation in $H^s({\mathbf R})$ with  $s>-\frac74$ and
the local well-posedness for the modified Kawahara equation in
$H^s({\mathbf R})$ with $s\ge-\frac14$. To prove these results, we
derive a fundamental estimate on dyadic blocks for the Kawahara
equation through the $[k; Z]$ multiplier norm method of Tao
\cite{Tao2001} and use this to obtain new bilinear and trilinear
estimates in suitable Bourgain spaces.

\vspace{.2in} \noindent {\bf AMS (MOS) Numbers:} 35Q53, 35B30, 76B45

\vspace{.1in} \noindent {\bf Keywords:} low-regularity
well-posedness, Kawahara equation, modified Kawahara equation

\section{Introduction}
\setcounter{equation}{0} \label{sec:1}

This paper is mainly concerned with the local well-posedness of the
initial-value problems (IVP) for the Kawahara equation
\begin{equation}\label{kawa}
\left\{
\begin{array}{l}
u_t + uu_x + \alpha u_{xxx} + \beta u_{xxxxx}=0, \quad x,t\in
{\mathbf
R},\\
u(x,0) =u_0(x).
\end{array}
\right.
\end{equation}
and for the modified Kawahara equation
\begin{equation}\label{mkawa}
\left\{
\begin{array}{l}
u_t + u^2u_x + \alpha u_{xxx} + \beta u_{xxxxx}=0, \quad x,t\in
{\mathbf
R},\\
u(x,0) =u_0(x),
\end{array}
\right.
\end{equation}
where $\alpha$ and $\beta$ are real constants and $\beta\neq 0$.
Attention will be focused on solutions in Sobolev spaces of negative
indices. These fifth-order KdV type equations arise in modeling
gravity-capillary waves on a shallow layer and magneto-sound
propagation in plasmas (see e.g. \cite{Ka}, \cite{KO}).

\vspace{.1in} The well-posedness issue on these fifth-order KdV type
equations has previously been studied by several authors. In
\cite{Po}, Ponce considered a general fifth-order KdV equation
$$
u_t + u_x + c_1 uu_x + c_2u_{xxx} + c_3u_xu_{xx} + c_4uu_{xxx}+ c_5
u_{xxxxx} =0, \quad x,\,t\in {\mathbf R}
$$
and established the global well-posedness of the corresponding IVP
for any initial data in $H^4({\mathbf R})$. In \cite{KPV94a} and
\cite{KPV94b}, Kenig, Ponce and Vega studied the local
well-posedness of the IVP for the following odd-order equation
$$
u_t+ \partial_x^{2j+1}u + P(u,\partial_x u, \cdots,
\partial_x^{2j} u) =0
$$
where $P$ is a polynomial having no constant or linear terms. They
obtained the local well-posedness for
$$
u_0\in H^s({\mathbf R})\cap L^2(|x|^m dx),
$$
where $s,m\in {\mathbf Z}^+$. Cui, Deng and Tao in \cite{CDT}
established the local well-posedness in $H^s$ with $ s>-1$ for the
Kawahara equation. Wang, Cui and Deng in a very recent work
\cite{WCD} obtained the local well-posedness in $H^s$ with $ s\ge
-\frac 75$ for the Kawahara equation by the same method as in
\cite{CDT}. Their method is derived from that of Kenig, Ponce and
Vega \cite{KPV96} for the cubic KdV equation. In \cite{Taocui}, Tao
and Cui studied the low regularity solutions of the modified
Kawahara equation (\ref{mkawa}) and proved the local well-posedness
of the IVP in any Sobolev space $H^s({\mathbf R})$ with $s\ge \frac
14$ by employing an approach of Kenig-Ponce-Vega for the generalized
KdV equations \cite{KPV93a}.

\vspace{.1in} Our goal here is to improve the existing low
regularity well-posedness results. To this end, we first derive a
fundamental estimate on dyadic blocks (see Lemma 3.2 below) for the
Kawahara equation by following the idea in the $[k;\,Z]$-multiplier
norm method introduced by Tao \cite{Tao2001}. We then apply this
fundamental estimate to establish new bilinear and trilinear
estimates in Bourgain spaces. Combining these estimates with a
contraction mapping argument, we are able to prove the following two
theorems.

\begin{thm}\label{major}
Let $s> -\frac74$ and $u_0\in H^s({\mathbf R})$. Then there exist
$b=b(s)\in(\frac 12,\,1)$ and $T=T(\|u_0\|_{H^s})>0$ such that the
IVP(1.1) has a unique solution on $[0,T]$ satisfying
$$
u\in C([0,T]; H^s({\mathbf R}))\quad\mbox{and}\quad u\in X_{s,b},
$$
where $X_{s,b}$ is a Bourgain type space {\rm(}defined in the next
section{\rm)}. In addition, the dependence of $u$ on $u_0$ is
Lipschitz.
\end{thm}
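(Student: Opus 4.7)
The plan is to recast (1.1) as a fixed-point problem in the Bourgain space $X_{s,b}$ via Duhamel's formula. Writing $uu_x = \tfrac12\partial_x(u^2)$ and introducing a smooth time cutoff $\psi(t)$ supported on $[-2T,2T]$ with $\psi\equiv 1$ on $[-T,T]$, one considers the map
\begin{equation*}
\Phi(u)(t) = \psi(t)\,e^{-t(\alpha\partial_x^3+\beta\partial_x^5)}u_0 - \tfrac12\,\psi(t)\int_0^t e^{-(t-t')(\alpha\partial_x^3+\beta\partial_x^5)}\partial_x(\psi_T(t')u(t'))^2\,dt',
\end{equation*}
where $\psi_T(t)=\psi(t/T)$. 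The standard linear estimates for the group $e^{-t(\alpha\partial_x^3+\beta\partial_x^5)}$ in $X_{s,b}$ spaces (proved by the usual dualization and unitarity argument) give $\|\psi(t)e^{-t L}u_0\|_{X_{s,b}}\lesssim \|u_0\|_{H^s}$ and control the Duhamel term by the $X_{s,b-1}$ norm of the forcing, with a small factor $T^{1-b-\delta}$ for suitable $\delta>0$ provided $b>1/2$.

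The whole theorem then reduces to proving the bilinear estimate
\begin{equation*}
\|\partial_x(uv)\|_{X_{s,b-1}} \lesssim \|u\|_{X_{s,b}}\,\|v\|_{X_{s,b}}, \qquad s>-\tfrac74,
\end{equation*}
for some $b=b(s)\in(\tfrac12,1)$. Once this is in hand, a ball of radius $R\sim\|u_0\|_{H^s}$ in $X_{s,b}$ can be mapped into itself by $\Phi$ and $\Phi$ becomes a contraction once $T$ is chosen small enough depending only on $\|u_0\|_{H^s}$; this yields existence, uniqueness, continuity of the flow in $C([0,T];H^s)$, and Lipschitz dependence on the initial data all at once. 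Persistence in $C([0,T];H^s)$ follows from the embedding $X_{s,b}\hookrightarrow C_t H^s_x$ for $b>1/2$.

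The heart of the argument, and the only place the threshold $s>-\tfrac74$ appears, is the bilinear estimate. I would prove it by dualizing to a multiplier estimate: via Plancherel the inequality becomes a bound on a weighted convolution integral of the form $\int \xi\, m(\xi,\tau;\xi_1,\tau_1)\,f(\xi_1,\tau_1)g(\xi_2,\tau_2)h(\xi,\tau)\,d\xi_1 d\tau_1 d\xi d\tau$ with $\xi_2=\xi-\xi_1$, $\tau_2=\tau-\tau_1$ and weights $\langle \xi\rangle^s \langle\tau-\phi(\xi)\rangle^{b-1}$ etc., where $\phi(\xi)=\alpha\xi^3-\beta\xi^5$ is the dispersion relation. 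Decomposing $\xi_j$ and the modulations $\lambda_j=\tau_j-\phi(\xi_j)$ into dyadic blocks reduces the task, after summing geometric series, to verifying the fundamental dyadic block estimate (Lemma 3.2) coming from the $[k;Z]$-multiplier machinery of Tao. The resonance identity $\phi(\xi)-\phi(\xi_1)-\phi(\xi_2)$ for fifth-order dispersion supplies, in the non-resonant regime, a gain of roughly $\max(|\xi_1|,|\xi_2|,|\xi|)^4$, and it is exactly a careful accounting of this gain against the derivative loss $\xi$ and the low regularity weight $\langle\xi\rangle^s$ that produces the sharp number $s=-\tfrac74$.

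The main obstacle is the high-high to low frequency interaction, i.e.\ $|\xi_1|\sim|\xi_2|\gg|\xi|$, where the resonance function does not separate the output frequency cleanly and one must compensate the bad weight $\langle\xi\rangle^{2s+1}$ by extracting the full fifth-order dispersive gain from Lemma 3.2 together with an extra Strichartz-type refinement in the modulation variables. In the resonant regime $\lambda_{\max}$ is forced to be large, which produces the saving in the $\langle\tau-\phi(\xi)\rangle^{b-1}$ factor; I would handle the borderline case by choosing $b$ slightly above $1/2$ depending on $s$, and check that all dyadic sums converge precisely when $s>-\tfrac74$. Once this estimate is established, the remainder of the argument (contraction, uniqueness, continuous dependence) is routine.
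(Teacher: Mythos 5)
Your proposal follows essentially the same route as the paper: Duhamel's formula with a time cutoff, the standard linear $X_{s,b}$ estimates, and a contraction argument, with everything reduced to the bilinear estimate $\|\partial_x(uv)\|_{X_{s,b-1}}\lesssim\|u\|_{X_{s,b}}\|v\|_{X_{s,b}}$ for $s>-\frac74$, proved by dyadic decomposition and Tao's $[k;Z]$-multiplier machinery via the fundamental block estimate, exactly as in Sections 2--4 of the paper. Your identification of the high-high to low-frequency interaction (with the fifth-order resonance gain $\sim N_{\max}^4N_{\min}$) as the case forcing the threshold $s=-\frac74$ also matches the paper's critical case analysis.
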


\begin{thm}\label{major2}
Let $s\ge -\frac 14$ and $u_0\in H^s({\mathbf R})$. Then there exist
$b=b(s)\in(\frac 12,\,1)$ and $T=T(\|u_0\|_{H^s})>0$ such that the
IVP for the modified Kawahara equation (\ref{mkawa}) has a unique
solution on $[0,T]$ satisfying
$$
u\in C([0,T]; H^s({\mathbf R}))\quad\mbox{and}\quad u\in X_{s,b},
$$
and the dependence of $u$ on $u_0$ is Lipschitz.
\end{thm}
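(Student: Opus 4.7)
The plan is to recast the IVP (\ref{mkawa}) as the Duhamel integral equation
$$u(t) = \psi(t)\,S(t)u_0 \;-\;\tfrac{1}{3}\,\psi_T(t)\int_0^t S(t-t')\,\partial_x\bigl(u^3(t')\bigr)\,dt',$$
where $S(t)=e^{-t(\alpha\partial_x^3+\beta\partial_x^5)}$ is the associated unitary group and $\psi,\psi_T$ are smooth time cutoffs, and to run a contraction mapping argument on a closed ball in $X_{s,b}$. The standard linear and Duhamel estimates $\|\psi\,S(\cdot)u_0\|_{X_{s,b}}\lesssim\|u_0\|_{H^s}$ and $\|\psi_T\int_0^t S(t-t')F\,dt'\|_{X_{s,b}}\lesssim T^{\theta}\|F\|_{X_{s,b-1}}$, valid for $b\in(1/2,1)$ and some $\theta=\theta(b)>0$, reduce the well-posedness to the trilinear estimate
$$\bigl\|\partial_x(u_1u_2u_3)\bigr\|_{X_{s,b-1}}\;\lesssim\;\prod_{i=1}^{3}\|u_i\|_{X_{s,b}}\qquad(s\ge-1/4),$$
for some $b$ slightly larger than $1/2$. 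Continuity into $H^s$, uniqueness, and Lipschitz dependence on $u_0$ then follow in the standard way from the fixed-point construction, so the heart of the matter is this trilinear estimate.

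By duality and Plancherel the trilinear estimate is equivalent to a weighted convolution inequality on the hyperplane $\tau=\tau_1+\tau_2+\tau_3$, $\xi=\xi_1+\xi_2+\xi_3$, involving the modulation weights $\langle\tau-\phi(\xi)\rangle^{1-b}$ and $\langle\tau_j-\phi(\xi_j)\rangle^{b}$ with symbol $\phi(\xi)=\alpha\xi^3+\beta\xi^5$. The resonance function
$$\Omega\;:=\;\phi(\xi)-\phi(\xi_1)-\phi(\xi_2)-\phi(\xi_3)$$
obeys $\max\{\langle\tau-\phi(\xi)\rangle,\langle\tau_j-\phi(\xi_j)\rangle\}\gtrsim|\Omega|$, and the key algebraic identity
$$\xi^5-\xi_1^5-\xi_2^5-\xi_3^5 \;=\; 5(\xi_1+\xi_2)(\xi_2+\xi_3)(\xi_3+\xi_1)\bigl(\xi_1^2+\xi_2^2+\xi_3^2+\xi_1\xi_2+\xi_2\xi_3+\xi_3\xi_1\bigr)$$
shows that at high frequencies either some pair-sum $\xi_i+\xi_j$ is small or the largest modulation must absorb a quintic gain comparable to $N^5$, where $N$ is the top frequency. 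This algebraic input drives the recovery of the single derivative in $\partial_x(u_1u_2u_3)$.

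To prove the trilinear estimate I would perform a Littlewood--Paley decomposition in both frequency ($N$, $N_i$) and modulation ($L$, $L_i$), and apply the fundamental dyadic block estimate, Lemma~3.2, to each octad $(N,N_1,N_2,N_3,L,L_1,L_2,L_3)$; this lemma is precisely the Kawahara-adapted outcome of Tao's $[k;Z]$-multiplier norm method \cite{Tao2001}. Summing over dyadic scales splits into the standard regimes $N_1\sim N_2\sim N_3$ (high-high-high), $N_1\sim N_2\gg N_3$ (high-high-low) and $N_1\gg N_2, N_3$ (high-low-low), and within each the single derivative $|\xi|\lesssim N$ is distributed across the three inputs using the factorization of $\Omega$ together with the modulation transfer $|\Omega|\lesssim \max(L,L_j)$.

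The main obstacle I expect is the endpoint $s=-1/4$. In the worst subcase---``high-high$\to$low'', where two comparable large frequencies $N_1\sim N_2$ nearly cancel to produce a small output frequency $N\sim N_3\ll N_1$---the dyadic sum in $N_1$ is only logarithmically convergent at $s=-1/4$, and the balance between the $\langle\xi_i\rangle^{-s}$ weights on the inputs, the $\langle\xi\rangle^{s-1}$ weight on the output, and the quintic gain from $\Omega$ fixes the regularity threshold exactly. Closing this case should require simultaneously the off-diagonal gain from the $(\xi_i+\xi_j)$-factor in $\Omega$ and a careful $\ell^2$-summation over the dyadic indices, together with the $T^{\theta}$ factor from time-localization to absorb any residual logarithmic loss. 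For $s>-1/4$ one gains a polynomial factor $N^{-\delta}$ in the top frequency and the contraction closes trivially; at the endpoint one instead shrinks $T$ to obtain a contraction on a ball of radius depending on $\|u_0\|_{H^s}$.
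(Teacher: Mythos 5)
Your overall architecture (Duhamel formulation, cutoff, linear and inhomogeneous estimates, contraction in $X_{s,b}$, reduction of everything to the trilinear estimate $\|\partial_x(u_1u_2u_3)\|_{X_{s,b-1}}\lesssim\prod_j\|u_j\|_{X_{s,b}}$ for $s\ge-\tfrac14$) matches the paper. The gap is in how you propose to prove that trilinear estimate. After duality and Plancherel it becomes a \emph{quadrilinear} weighted convolution bound, i.e.\ a $[4;{\Bbb R}\times{\Bbb R}]$ multiplier norm estimate with four frequencies and four modulations, and with resonance function $\Omega=p(\xi_1)+p(\xi_2)+p(\xi_3)+p(\xi_4)$ on $\xi_1+\xi_2+\xi_3+\xi_4=0$. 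But the fundamental dyadic block estimate, Lemma~3.2, which you plan to apply ``to each octad,'' is a \emph{trilinear} ($[3;{\Bbb R}\times{\Bbb R}]$) block estimate: it lives on $\xi_1+\xi_2+\xi_3=0$ with resonance $h(\xi)=p(\xi_1)+p(\xi_2)+p(\xi_3)$ and three modulation bands, and it simply does not apply to the 4-frequency blocks your decomposition produces. No quadrilinear analogue is proved in the paper, and proving one directly (your Bourgain/Kenig--Ponce--Vega style case analysis with the factorization of $\xi^5-\xi_1^5-\xi_2^5-\xi_3^5$) is exactly the difficulty the authors say they could not overcome by that route. So as written your plan stalls at its central step.

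The missing idea is the reduction that makes Lemma~3.2 usable: first estimate $|\xi_1+\xi_2+\xi_3|$ by $\langle\xi_4\rangle$ and use the fractional Leibnitz bound $\langle\xi_4\rangle^{s+1}\lesssim\langle\xi_4\rangle^{1/2}\sum_{j=1}^3\langle\xi_j\rangle^{s+1/2}$ (valid since $s>-\tfrac12$) together with symmetry and the harmless replacement of one weight $\langle\tau_2-p(\xi_2)\rangle^{b}$ by $\langle\tau_2-p(\xi_2)\rangle^{1-b}$; then apply Tao's $TT^*$ identity (Lemma~3.7 of \cite{Tao2001}) to factor the resulting $[4;{\Bbb R}\times{\Bbb R}]$ norm into the square of a $[3;{\Bbb R}\times{\Bbb R}]$ norm, i.e.\ into the asymmetric bilinear estimate $\|uv\|_{L^2}\lesssim\|u\|_{X_{-1/2,\,1/2-\epsilon}}\|v\|_{X_{s,\,1/2+\epsilon}}$ for $s\ge-\tfrac14$ (Lemma~5.2). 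Only at that stage does the dyadic block Lemma~3.2 enter, and the threshold $s\ge-\tfrac14$ is produced cleanly by the $(+-)$ coherence case $N\sim N_1\sim N_2\gg N_3$ of that bilinear estimate; in particular the estimate closes at the endpoint with no logarithmic loss, so your proposed rescue via the $T^{\theta}$ factor at $s=-\tfrac14$ is not needed (and would in any case be delicate to justify, since the trilinear bound itself must hold uniformly before time localization is invoked). To complete your proof you must either supply this $TT^*$/bilinear reduction or prove a genuine 4-linear block estimate from scratch; without one of these the application of Lemma~3.2 is a category error.
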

\noindent

The proofs of Theorems \ref{major} and \ref{major2} will be provided
in the subsequent sections.

\section{Linear and bilinear estimates for the Kawahara equation}
\setcounter{equation}{0} \label{sec:2}

This section provides the linear and bilinear estimates for the
Kawahara equation. We start with a few notation. Denote by $W(t)$
the unitary group generating the solution of the IVP for the linear
equation
$$
\left\{
\begin{array}{l}
v_t + \alpha v_{xxx} + \beta v_{xxxxx} =0, \quad x\in {\mathbf
R},\,\,t\in {\mathbf R},\\
v(x,0) =v_0(x).
\end{array}
\right.
$$
That is,
$$
 v(x,t) =W(t) v_0(x) =S_t \ast v_0(x),
$$
where $ \widehat{S}_t = e^{it p(\xi)}$ with $ p(\xi) = -\beta \xi^5
+\alpha \xi^3$, or
$$
S_t(x)  = \int e^{i(x\xi + t p(\xi))}\,d\xi.
$$
For $s,b\in {\mathbf R}$,  let $X_{s,\,b}$ denote the completion of
the functions in $ C_0^\infty$ with respect to the norm
$$
\|f\|^2_{X_{s,b}} =\int \langle \xi\rangle^{2s} \langle \tau -
p(\xi)\rangle^{2b}\, |\widehat{f}(\xi,\tau)|^2 \,d\xi\,d\tau,
$$
where $\langle \xi\rangle =1+|\xi|$. It is easy to verify that
$$
\|f\|_{X_{s,\,b}} = \|J^s \Lambda^b W(-t) f\|_{L^2_{x,\,t}}
$$
where
$$
\widehat{J g}(\xi) = (1+|\xi|) \widehat{g}(\xi),\quad
\widehat{\Lambda h}(\tau) = (1+|\tau|) \widehat{h}(\tau).
$$

Let $\psi\in C_0^\infty$ be a standard bump function and consider
the following integral equation
$$
u(t)= \psi(\delta^{-1}t) W(t) u_0 - \psi(\delta^{-1}t)\int_0^t
W(t-t') u(t') \partial_x u(t')\,dt'.
$$
Denote the right-hand side by ${\cal T}(u)$. The goal is to show
that ${\cal T}(u)$ is contraction on the following complete metric
space $Y$, where
$$
Y= \{ u\in X_{s,\,b}:\,\, \|u\|_{X_{s,\,b}} \le 2c_0
\delta^{(1-2b)/2} \|u_0\|_{H^s}\}
$$
with  metric
$$d(u, v)=\|u-v\|_{X_{s,\,b}},\qquad u, v\in Y,$$
where $c_0$ is the constant appeared in Proposition 2.1. For this
purpose, we need two linear estimates and one bilinear estimate
stated in the following propositions.
\begin{prop}
For $s\in  {\mathbf R}$ and $b>\frac12$,
\begin{eqnarray}
&& \|\psi(\delta^{-1} t) W(t) u_0\|_{X_{s,\,b}} \le c_0
\delta^{(1-2b)/2}\|u_0\|_{H^s}, \nonumber \\
&& \left\|\psi(\delta^{-1} t)\int_0^t W(t-t') f(t')\,dt'
\right\|_{X_{s,\,b}} \le c_0 \delta^{(1-2b)/2}
\|f\|_{X_{s,\,b-1}}.\nonumber
\end{eqnarray}
\end{prop}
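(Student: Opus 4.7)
The plan is to exploit the identity $\|f\|_{X_{s,b}} = \|J^s \Lambda^b W(-t) f\|_{L^2_{x,t}}$ stated earlier in this section, which reduces both estimates to Sobolev-type computations on the $(x,t)$-side once the semigroup is peeled off. Since $W$ is a unitary group generated by a real symbol $p(\xi)$, the argument is insensitive to the particular form of $p(\xi) = -\beta\xi^5 + \alpha\xi^3$ and essentially follows the classical Bourgain--Kenig--Ponce--Vega template (cf.\ \cite{KPV96}, \cite{Tao2001}).

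For the homogeneous estimate, set $g(x,t) = \psi(\delta^{-1}t) W(t) u_0(x)$. The group law gives $W(-t) g(x,t) = \psi(\delta^{-1}t) u_0(x)$, so the space-time Fourier transform factorizes as $\widehat{\psi_\delta}(\tau)\, \widehat{u_0}(\xi)$ where $\psi_\delta(t) = \psi(\delta^{-1}t)$. Consequently
\begin{equation*}
\|g\|_{X_{s,b}}^2 = \|u_0\|_{H^s_x}^2 \, \|\psi_\delta\|_{H^b_t}^2,
\end{equation*}
and a direct estimate of $\widehat{\psi_\delta}(\tau) = \delta\, \widehat{\psi}(\delta\tau)$, splitting $|\tau|\lesssim \delta^{-1}$ from $|\tau|\gtrsim \delta^{-1}$ and using rapid decay of $\widehat{\psi}$, yields $\|\psi_\delta\|_{H^b_t} \lesssim \delta^{(1-2b)/2}$ for $b>\tfrac12$.

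For the Duhamel estimate, I would start from the Fourier identity
\begin{equation*}
\widehat{\int_0^t W(t-t') f(t')\, dt'}(\xi) = \int \frac{e^{it\tau} - e^{itp(\xi)}}{i(\tau - p(\xi))}\, \widetilde{f}(\xi,\tau)\, d\tau,
\end{equation*}
obtained by inserting the time Fourier expansion of $\widehat{f}(\xi,t')$ and integrating in $t'$. Multiplying by $\psi(\delta^{-1}t)$ and applying $W(-t)$ reduces the claim to an $L^2$ bound on a multiplier-type operator. I would split $\widetilde{f}$ by a smooth cutoff in $\tau - p(\xi)$ at scale $\delta^{-1}$. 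On the outer region $|\tau - p(\xi)|\ge \delta^{-1}$, the denominator is harmless: distributing the two exponentials gives one piece of the form $\psi_\delta$ times a function whose symbol carries $(\tau-p(\xi))^{-1}$, and another piece of the form $W(t)$ applied to such a function, each handled by the homogeneous estimate since the $(\tau-p(\xi))^{-1}$ gain matches the $b-1$ weight in $X_{s,b-1}$. On the inner region $|\tau-p(\xi)|<\delta^{-1}$, I would Taylor-expand $e^{it(\tau-p(\xi))} - 1 = \sum_{k\ge 1}(it(\tau-p(\xi)))^k/k!$, so that each term becomes $t^k \psi(\delta^{-1}t)$ multiplied by a function whose $X_{s,b-1}$ weight absorbs $(\tau - p(\xi))^k$ at the cost of $\delta^{k+1-b}$.

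The main obstacle is the convergence of this Taylor series in the appropriate norm on the inner region: one must show that $\|t^k\psi(\delta^{-1}t)\|_{H^b_t}$ grows only polynomially in $k$ after rescaling $s = \delta^{-1}t$, and that the resulting geometric series in $k$ produces the claimed factor $\delta^{(1-2b)/2}$ cleanly. Once this bookkeeping is carried out, the pieces assemble into the stated inequality, completing the proof.
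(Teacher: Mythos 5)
The paper does not actually prove this proposition---it simply states that the estimates ``follow directly from Kenig, Ponce and Vega \cite{KPV93b}''---and your proposal reconstructs precisely that standard argument (the group-law factorization with $\|\psi_\delta\|_{H^b_t}\lesssim \delta^{(1-2b)/2}$ for the homogeneous term; the splitting at modulation $\delta^{-1}$ together with Taylor expansion of $e^{it(\tau-p(\xi))}-1$ for the Duhamel term), so it is essentially the same approach as the one the paper invokes. One small caveat: the outer-region piece $\psi_\delta(t)\,G(x,t)$ with $\widetilde G=\chi_{|\tau-p(\xi)|\ge\delta^{-1}}\,\widetilde f/(i(\tau-p(\xi)))$ is \emph{not} of the form $\psi_\delta W(t)u_0$, so it is handled not by the homogeneous estimate but by the standard bound for multiplication by a time cutoff on $X_{s,b}$ (convolution with $\widehat{\psi_\delta}$ and $\langle\tau-p(\xi)\rangle^b\lesssim\langle\tau-\sigma\rangle^b+\langle\sigma-p(\xi)\rangle^b$), which combined with the gain from the restriction $|\tau-p(\xi)|\ge\delta^{-1}$ still yields the claimed factor because $\delta^{(1-2b)/2}\ge 1$ for $b>\tfrac12$ and $\delta\le 1$.
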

The proof of these estimates follows directly from Kenig, Ponce and
Vega \cite{KPV93b}.

\begin{prop}\label{nonl}
For any $s> -\frac74$, there is $b$ satisfying $\frac12<b<1$ such
that
\begin{equation}\label{bil}
\|\partial_x(uv)\|_{X_{s,\,b-1}} \le c_1
\|u\|_{X_{s,\,b}}\|v\|_{X_{s,\,b}},
\end{equation}
\end{prop}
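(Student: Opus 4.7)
The plan is to prove (\ref{bil}) by Plancherel and duality, reducing it to a weighted trilinear convolution estimate, then performing a Littlewood--Paley decomposition in both frequency and modulation, and finally assembling the dyadic block estimate (Lemma~3.2) into a summable bound.

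Setting $f(\xi,\tau)=\langle\xi\rangle^{s}\langle\tau-p(\xi)\rangle^{b}\widehat{u}(\xi,\tau)$, analogously $g$ for $v$, and testing $\partial_x(uv)$ against $w$ with $\|w\|_{X_{-s,1-b}}=1$, (\ref{bil}) is equivalent to
\begin{equation*}
I:=\left|\int_{\Gamma}\frac{|\xi_3|\langle\xi_3\rangle^{s}\,f(\xi_1,\tau_1)\,g(\xi_2,\tau_2)\,h(\xi_3,\tau_3)}{\langle\xi_1\rangle^{s}\langle\xi_2\rangle^{s}\langle\sigma_1\rangle^{b}\langle\sigma_2\rangle^{b}\langle\sigma_3\rangle^{1-b}}\,d\mu\right|\lesssim\|f\|_{2}\|g\|_{2}\|h\|_{2},
\end{equation*}
where $\Gamma=\{\xi_1+\xi_2+\xi_3=0,\;\tau_1+\tau_2+\tau_3=0\}$, $\sigma_i=\tau_i-p(\xi_i)$, $h$ is the normalised transform of $w$, and $d\mu$ is the induced measure on $\Gamma$. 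The algebraic heart of the matter is the resonance identity on $\Gamma$: Newton's formulas for quantities summing to zero give $\xi_1^3+\xi_2^3+\xi_3^3=3\xi_1\xi_2\xi_3$ and $\xi_1^5+\xi_2^5+\xi_3^5=-5\xi_1\xi_2\xi_3(\xi_1\xi_2+\xi_2\xi_3+\xi_3\xi_1)$, so
\begin{equation*}
\sigma_1+\sigma_2+\sigma_3=-\xi_1\xi_2\xi_3\bigl[5\beta(\xi_1\xi_2+\xi_2\xi_3+\xi_3\xi_1)+3\alpha\bigr].
\end{equation*}
Outside a bounded exceptional set this yields $L_{\max}\gtrsim N_1 N_2 N_3\,N_{\max}^{2}$, with $N_i\sim\langle\xi_i\rangle$ and $L_i\sim\langle\sigma_i\rangle$; this quartic modulation gain in $N_{\max}$ is what drives the Sobolev threshold down to $-\tfrac74$ and constitutes the true improvement over third-order KdV.

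I would then dyadically decompose $f=\sum_{N_1,L_1}f_{N_1,L_1}$, and analogously for $g$ and $h$. Lemma~3.2 controls the $L^{2}$-convolution of two dyadic pieces by an explicit function $M(N_1,N_2,N_3;L_1,L_2,L_3)$, so Cauchy--Schwarz in $(\xi_3,\tau_3)$ bounds the contribution of each dyadic tuple to $I$ by
\begin{equation*}
\frac{N_3\langle N_3\rangle^{s}}{\langle N_1\rangle^{s}\langle N_2\rangle^{s}\,L_1^{b}L_2^{b}L_3^{1-b}}\,M\cdot\|f_{N_1,L_1}\|_{2}\|g_{N_2,L_2}\|_{2}\|h_{N_3,L_3}\|_{2}.
\end{equation*}
In the low-frequency regime and in high-low-high interactions there is substantial slack, so the sums over $(N_i,L_i)$ converge routinely.

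The main obstacle, which pins the exponent at $-\tfrac74$, is the high-high to low interaction $N_1\sim N_2=:N\gg N_3$. There the unfavourable factor $\langle N_1\rangle^{-s}\langle N_2\rangle^{-s}=N^{-2s}$ for $s<0$ must be absorbed by the modulation gain $L_{\max}^{-\min(b,1-b)}$ (noting that $L_{\max}\gtrsim N^{4}N_3$ here) together with whatever $N$-decay is provided by $M$. Balancing these contributions and performing the sums in $N$ and $N_3$ yields a net requirement that is satisfied precisely for $s>-\tfrac74$, with $b\in(\tfrac12,1)$ chosen sufficiently close to $\tfrac12$. The remaining dyadic regimes, including the case of all three frequencies comparable, produce strictly better exponents and sum without difficulty, completing the proof of (\ref{bil}).
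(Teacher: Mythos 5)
Your overall strategy is the same as the paper's: Plancherel/duality reduces \eqref{bil} to a $[3;{\Bbb R}\times{\Bbb R}]$ multiplier bound, you decompose dyadically in the $\xi_j$ and the modulations $\sigma_j$, invoke the block estimate of Lemma \ref{fundamental estimate on dyadic blocks}, and sum. Your resonance identity is correct (it agrees with \eqref{resonance} after using $\xi_3=-\xi_1-\xi_2$), and you correctly locate the critical interaction at $N_1\sim N_2\gg N_3$ with the output modulation comparable to $H\sim N^4N_3$, with $b$ taken close to $\tfrac12$.

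The genuine gap is that the step which actually produces the exponent $-\tfrac74$ is asserted rather than carried out. In the critical regime the available block bound is not the generic one but the weaker $(+-)$-coherence bound \eqref{estimate2}, $L_{min}^{1/2}N_{max}^{-2}\min\bigl(H,\tfrac{N_{max}}{N_{min}}L_{med}\bigr)^{1/2}$, and you never write it down; your phrase ``whatever $N$-decay is provided by $M$'' hides exactly the quantity on which the threshold depends. To conclude one must split the sum according to which term realizes the minimum (equivalently $N_3\lessgtr (L_{med}/N^3)^{1/2}$) and according to $N_3\le 1$ versus $1<N_3\ll N$, sum in $N_3$ first and then in the $L$'s; this is what yields conditions of the form $2s+4-b>0$, $2s+4(1-b)+\tfrac32(\tfrac12+b)>0$, $2s+4(1-b)+\tfrac32>0$, i.e.\ $\tfrac12<b<\min\bigl(\tfrac{4s+11}{8},\tfrac{s+6}{5}\bigr)$, and it is only here that $s>-\tfrac74$ emerges (the bound $\tfrac{4s+11}{8}$ collapses to $\tfrac12$ at $s=-\tfrac74$). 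Saying that the balance ``is satisfied precisely for $s>-\tfrac74$'' presupposes the conclusion of this computation. In addition, the other $(+-)$ permutation $N_2\sim N_3\gg N_1$, $H\sim L_1$ (low frequency entering, high frequency output) is not symmetric to the critical case and needs its own summation (the paper's $J_1,J_2$), and the high-modulation regime $L_{max}\sim L_{med}\gg H$ must be treated separately with the elementary $L_{min}^{1/2}N_{min}^{1/2}$ bound; neither is automatic from ``the remaining regimes produce strictly better exponents,'' although both do turn out to be non-critical. None of this requires a new idea, but it is the bulk of the proof and is missing from your write-up.
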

\noindent where $c_1$ is a constant depending on $s$ and $b$ only.

\vspace{.1in} Proposition \ref{nonl} will be proved in Section 4 and
in the next section we introduce Tao's $[k;\,Z]$-multiplier norm
method and prove a fundamental estimate on dyadic blocks for the
Kawahara equation from which a variety of bilinear estimates can be
derived. Once the estimates in Propositions 2.1 and 2.2 are
available, a standard argument then yields that ${\cal T}(u)$ is a
contraction on $Y$.

\section{Fundamental estimate on dyadic blocks for the Kawahara equation}
\setcounter{equation}{0} \label{sec:3}

In this section we introduce Tao's $[k;\,Z]-$multiplier norm method
and establish the fundamental estimate on dyadic blocks, i.e., Lemma
3.2 for the Kawahara equation from which Proposition \ref{nonl} and
other bilinear estimates (see Lemma 5.2 below) could be derived.

Let $Z$ be any abelian additive group with an invariant measure
$d\xi$. For any integer $k\geq 2$, we let $\Gamma_k(Z)$ denote the
hyperplane
$$\Gamma_k(Z):=\{(\xi_1,\cdots,\,\xi_k)\in
Z^k:\,\xi_1+\cdots+\xi_k=0\}$$ which is endowed with the measure
$$\int_{\Gamma_k(Z)}f:=\int_{Z^{k-1}}f(\xi_1,\cdots,\,\xi_{k-1},\,-\xi_1
-\cdots-\xi_{k-1})d\xi_1\cdots d\xi_{k-1}.$$

A $[k;\,Z]-$multiplier is defined to be any function
$m:\,\Gamma_k(Z)\rightarrow {\Bbb C}$ which was introduced by Tao in
\cite{Tao2001}. And the multiplier norm $\|m\|_{[k;\,Z]}$ is defined
to be the best constant such that the inequality
\begin{eqnarray}
\Big |\int_{\Gamma_k(Z)}m(\xi)\prod_{j=1}^kf_j(\xi_j)\Big |\leq
c\prod_{j=1}^k\|f_j\|_{L^2(Z)},
\end{eqnarray}
holds for all test functions $f_j$ on $Z$. Tao systematically
studied this kind of weighted convolution estimates on $L^2$ in
\cite{Tao2001}. To establish the fundamental estimate on dyadic
blocks for the Kawahara equation, we use some notations.

We use $A\lesssim B$ to denote the statement that $A\leq CB$ for
some large constant $C$ which may vary from line to line and depend
on various parameters, and similarly use $A\ll B$ to denote the
statement $A\leq C^{-1}B$. We use $A\sim B$ to denote the statement
that $A\lesssim B\lesssim A$.

Any summations over capitalized variables such as $N_j,\,L_j,\,H$
are presumed to be dyadic, i.e., these variables range over numbers
of the form $2^k$ for $k\in {\Bbb Z}$. Let $N_1,\,N_2,\,N_3>0$. It
will be convenient to define the quantities $N_{max}\geq N_{med}\geq
N_{min}$ to be the maximum, median, and minimum of $N_1,\,N_2,\,N_3$
respectively. Similarly define $L_{max}\geq L_{med}\geq L_{min}$
whenever $L_1,\,L_2,\,L_3>0$. And we also adopt the following
summation conventions. Any summation of the form $L_{max}\sim\cdots$
is a sum over the three dyadic variables $L_1,\,L_2,\,L_3\gtrsim 1$,
thus for instance
$$\sum_{L_{max}\sim H}:=\sum_{L_1,\,L_2,\,L_3\gtrsim 1:\,L_{max}\sim
H}.$$ Similarly, any summation of the form $N_{max}\sim\cdots$ sum
over the three dyadic variables $N_1,\,N_2,\,N_3>0$, thus for
instance
$$\sum_{N_{max}\sim N_{med}\sim
N}:=\sum_{N_1,\,N_2,\,N_3>0:\,N_{max}\sim N_{med}\sim N}.$$ If
$\tau,\,\xi$ and $p(\xi)$ are given, we define
$$\lambda:=\tau-p(\xi).$$ Similarly,
$$\lambda_j:=\tau_j-p(\xi_j),\,j=1,\,2,\,3.$$

In this paper, we do not go further on the general framework of
Tao's weighted convolution estimates. We focus our attention on the
$[3;\,Z]-$multiplier norm estimate for the Kawahara equation. During
the estimate we need the resonance function
\begin{eqnarray}\label{resonance}
h(\xi)=p(\xi_1)+p(\xi_2)+p(\xi_3)=-\lambda_1-\lambda_2-\lambda_3,
\end{eqnarray}
which measures to what extent the spatial frequencies
$\xi_1,\,\xi_2,\,\xi_3$ can resonate with each other.

By dyadic decomposition of the variables $\xi_j,\,\lambda_j$, as
well as the function $h(\xi)$, one is led to consider
\begin{eqnarray}\label{block}
\|X_{N_1,\,N_2,\,N_3;\,H;\,L_1,\,L_2,\,L_3}\|_{[3,\,{\Bbb
R}\times{\Bbb R}]},
\end{eqnarray}
where $X_{N_1,\,N_2,\,N_3;\,H;\,L_1,\,L_2,\,L_3}$ is the multiplier
\begin{eqnarray}
X_{N_1,\,N_2,\,N_3;\,H;\,L_1,\,L_2,\,L_3}(\xi,\,\tau):=\chi_{|h(\xi)|\sim
H}\prod_{j=1}^3\chi_{|\xi_j|\sim N_j}\chi_{|\lambda_j|\sim L_j}.
\end{eqnarray}

From the identities
$$\xi_1+\xi_2+\xi_3=0$$
and
$$\lambda_1+\lambda_2+\lambda_3+h(\xi)=0$$
on the support of the multiplier, we see that
$X_{N_1,\,N_2,\,N_3;\,H;\,L_1,\,L_2,\,L_3}$ vanishes unless
\begin{eqnarray}
N_{max}\sim N_{med},
\end{eqnarray}
and
\begin{eqnarray}
L_{max}\sim \max(H,\,L_{med}).
\end{eqnarray}

 From the definition of the resonance function, i.e.,
(\ref{resonance}), we obtain the following algebraic smoothing
relation
\begin{lemma} If $N_{max}\sim N_{med}\gtrsim 1$, then
\begin{equation}\label{3sig}
\max\{|\lambda_1|, |\lambda_2|,|\lambda_3|\} \gtrsim
N_{\max}^4N_{\min}.
\end{equation}
\end{lemma}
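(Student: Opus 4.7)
The plan is to exploit the constraint $\xi_1+\xi_2+\xi_3=0$ on the support of the multiplier to produce an explicit factorization of the resonance function $h(\xi)=p(\xi_1)+p(\xi_2)+p(\xi_3)$ whose absolute value can be bounded below by $N_{min}N_{max}^4$. Once such a pointwise estimate is in hand, the identity $\lambda_1+\lambda_2+\lambda_3=-h(\xi)$ forces $\max|\lambda_j|\geq \tfrac{1}{3}|h(\xi)|$, which is precisely (\ref{3sig}).

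For the algebraic step I would apply Newton's identities on the hyperplane $\xi_1+\xi_2+\xi_3=0$. Writing $\sigma_2:=\xi_1\xi_2+\xi_2\xi_3+\xi_3\xi_1$ and $\sigma_3:=\xi_1\xi_2\xi_3$, one obtains the standard symmetric identities $\xi_1^3+\xi_2^3+\xi_3^3=3\sigma_3$ and $\xi_1^5+\xi_2^5+\xi_3^5=-5\sigma_2\sigma_3$. Substituting these into $p(\xi)=-\beta\xi^5+\alpha\xi^3$ produces the clean factorization
\begin{equation*}
h(\xi)\;=\;\xi_1\xi_2\xi_3\,\bigl(5\beta\sigma_2+3\alpha\bigr).
\end{equation*}
Squaring the relation $\xi_1+\xi_2+\xi_3=0$ gives additionally $\sigma_2=-\tfrac12(\xi_1^2+\xi_2^2+\xi_3^2)$, so the parenthetical factor is essentially a quadratic polynomial in the frequencies.

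The quantitative step is then a short case analysis driven by $N_{max}\sim N_{med}$. This hypothesis forces at least two of the $|\xi_j|$ to be of order $N_{max}$, so $|\sigma_3|\sim N_{min}N_{max}^2$ and $|\sigma_2|\sim N_{max}^2$. If $N_{max}$ exceeds a threshold $C=C(\alpha,\beta)$ large enough that $|5\beta\sigma_2|\geq 6|\alpha|$, the triangle inequality gives $|5\beta\sigma_2+3\alpha|\gtrsim N_{max}^2$, and hence $|h(\xi)|\gtrsim N_{min}N_{max}^4$ as desired. In the complementary range $N_{max}\lesssim 1$ the target quantity $N_{min}N_{max}^4$ is itself bounded, so the estimate follows from the standing dyadic convention $L_j\gtrsim 1$, which already ensures $\max|\lambda_j|\gtrsim 1$.

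The main obstacle I anticipate is the possible vanishing of the second factor $5\beta\sigma_2+3\alpha$: when $\alpha/\beta>0$ this polynomial has genuine zeros on the surface $\xi_1^2+\xi_2^2+\xi_3^2=6\alpha/(5\beta)$, so no naive uniform pointwise bound of the form $|h(\xi)|\gtrsim N_{min}N_{max}^4$ can hold for all frequencies. The case split described above is the cleanest way I see around this, since it confines the resonant locus to a bounded frequency window on which (\ref{3sig}) becomes trivial from the dyadic normalization of the $\lambda_j$'s.
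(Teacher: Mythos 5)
Your proof is correct and follows essentially the same route as the paper: the factorization $h(\xi)=\sigma_3\,(5\beta\sigma_2+3\alpha)$ you obtain from Newton's identities is exactly the paper's identity $h(\xi)=\xi_1\xi_2\xi_3\bigl(3\alpha-5\beta(\xi_1^2+\xi_1\xi_2+\xi_2^2)\bigr)$, since $\sigma_2=-(\xi_1^2+\xi_1\xi_2+\xi_2^2)$ on the hyperplane $\xi_1+\xi_2+\xi_3=0$, and both arguments conclude with $\max_j|\lambda_j|\ge\tfrac13|h(\xi)|$. Your explicit case split around the resonance surface $5\beta\sigma_2+3\alpha=0$ (possible only when $\alpha/\beta>0$, and confined to $N_{\max}\lesssim_{\alpha,\beta}1$) is a point the paper passes over silently, absorbing it into the implicit constants in $N_{\max}\gtrsim 1$ and the convention $L_j\gtrsim 1$; this added care is sound but does not change the substance of the argument.
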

\begin{proof} Noticing that $p(\xi_j) =-\beta \xi_j^5
+\alpha \xi_j^3,\,j=1,\,2,\,3$, we have
\begin{eqnarray*}
h(\xi)&=& -\lambda_1- \lambda_2- \lambda_3
=p(\xi_1)+p(\xi_2)+p(\xi_3)\\
&=& \xi_1\xi_2\xi_3\Big(3\alpha-5\beta
(\xi_1^2+\xi_1\xi_2+\xi_2^2)\Big).
\end{eqnarray*}
Since $\xi_1^2+\xi_1\xi_2+\xi_2^2 \sim \max\{\xi_1^2,\,\xi_2^2\}$,
and if $N_{max}\sim N_{med}\gtrsim 1$ and $\beta\neq 0$, we obtain
that
\begin{eqnarray*}
\max \{|\lambda_1|, |\lambda_2|, |\lambda_3|\} &\ge& \frac{1}{3}
(|\lambda_1 + \lambda_2+\lambda_3|) \gtrsim N_{\max}^4N_{\min}.
\end{eqnarray*}
\end{proof}
Under the condition of Lemma 3.1, we see that we may assume that
\begin{eqnarray}
H\sim N_{\max}^4N_{\min},
\end{eqnarray}
since the multiplier in (3.4
) vanishes otherwise.

Now we are in the position to state the fundamental estimate on
dyadic blocks for the Kawahara equation.
\begin{lemma}\label{fundamental estimate on dyadic blocks} Let
$H,\,N_1,\,N_2,\,N_3,\,L_1,\,L_2,\,L_3>0$ obey (3.5), (3.6), (3.8).

$\diamond$((++)Coherence) If $N_{max}\sim N_{min}$ and $L_{max}\sim
H$, then we have
\begin{eqnarray}\label{estimate1}
(\ref{block})\lesssim L_{min}^{1/2}N_{max}^{-2}L_{med}^{1/2}.
\end{eqnarray}

$\diamond$((+-)Coherence) If $N_2\sim N_3\gg N_1$ and $H\sim
L_1\gtrsim L_2,\,L_3$, then
\begin{eqnarray}\label{estimate2}
(\ref{block})\lesssim
L_{min}^{1/2}N_{max}^{-2}\min(H,\,\frac{N_{max}}{N_{min}}L_{med})^{1/2}.
\end{eqnarray}
Similarly for permutations.

$\diamond$ In all other cases, we have
\begin{eqnarray}\label{estimate3}
(\ref{block})\lesssim
L_{min}^{1/2}N_{max}^{-2}\min(H,\,L_{med})^{1/2}.
\end{eqnarray}
\end{lemma}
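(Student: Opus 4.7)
The plan is to follow the $[k;Z]$-multiplier norm paradigm of Tao \cite{Tao2001}, specifically the dyadic block argument he used for the KdV equation, adapted to the fifth-order dispersion of the Kawahara equation. The key algebraic inputs are Lemma 3.1 and the derivative structure $p'(a)-p'(b)=-5\beta(a^4-b^4)+3\alpha(a^2-b^2)$, which at high frequency behaves like $N_{max}^{3}|a-b|$ whenever $|a|,|b|\lesssim N_{max}$; this factor is ultimately what produces the $N_{max}$ gains in the three stated estimates.

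First I would reduce the $[3;\,{\mathbf R}\times{\mathbf R}]$ multiplier norm to a set-measure estimate via the standard two-step Cauchy--Schwarz argument from \cite{Tao2001}. After relabeling so that $L_1\le L_2\le L_3$, one obtains
$$\|X\|_{[3;\,{\mathbf R}\times{\mathbf R}]}^2\lesssim L_{min}\cdot\sup_{(\xi_*,\tau_*)}|\Omega(\xi_*,\tau_*)|,$$
where $\Omega$ is a two-dimensional slice of the support of the multiplier. Integrating one $\tau$-variable against the constraint on the remaining $\lambda$'s produces a factor $L_{med}$, reducing the problem to a one-dimensional $\xi$-measure controlled by $|h(\xi)|\sim H$. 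Using the change of variables $\xi_j\mapsto h(\xi)$, whose Jacobian is $|\partial_{\xi_j}h|=|p'(\xi_j)-p'(\xi_k)|$, together with the identity $H\sim N_{max}^{4}N_{min}$ from (3.8), yields the generic bound $|\Omega|\lesssim L_{med}\cdot\min(N_{min},\,H/|\partial h|)$, which is already the non-coherent estimate (3.11).

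For the two coherence cases the analysis branches. In the $(++)$-coherent case all $N_j$ are comparable, so $|\xi_j-\xi_k|\sim N_{max}$ and $|\partial h|\sim N_{max}^{4}$; feeding this into the reduction directly gives (3.9). In the $(+-)$-coherent case, where $N_2\sim N_3\gg N_1$ and $H\sim L_1$, the difference $\xi_2-\xi_3=2\xi_2+\xi_1$ can be much smaller than $N_{max}$, so the naive Jacobian bound is lossy; here one exploits the hypothesis $H\sim L_1$ together with the explicit factorization $h(\xi)=\xi_1\xi_2\xi_3(3\alpha-5\beta(\xi_1^{2}+\xi_1\xi_2+\xi_2^{2}))$ established in Lemma 3.1. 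Depending on whether the effective Jacobian is large or small, one uses either the $H^{1/2}$ bound coming directly from $|\lambda_1|\sim L_1\sim H$ or the refined Jacobian bound producing $((N_{max}/N_{min})L_{med})^{1/2}$; the $\min$ in (3.10) is precisely the choice of the better of the two.

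The main obstacle I anticipate is the $(+-)$-coherent case. The near-degeneracy of $p'(\xi_2)-p'(\xi_3)$ in the regime $\xi_2\sim -\xi_3$ must be controlled by leveraging the coherence hypothesis $H\sim L_1$, and the combinatorics of interpolating between the two subregimes of the $\min(H,(N_{max}/N_{min})L_{med})$ factor is delicate. Verifying that one really recovers the sharp $N_{max}^{-2}$ gain in all three cases --- the factor ultimately responsible for the improved threshold $s>-\tfrac{7}{4}$ in Theorem \ref{major} --- is where the dyadic bookkeeping enforced by (3.5)--(3.8) must be tracked most carefully.
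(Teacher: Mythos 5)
Your overall route is the paper's route (reduce the block norm via Tao's Corollary 4.2 to a one-dimensional $\xi_2$-measure and then count level sets of the resonance), but two of your key assertions are wrong and they sit exactly at the delicate points. First, the derivative estimate you base everything on is false: $p'(a)-p'(b)=(a-b)(a+b)\bigl(3\alpha-5\beta(a^2+b^2)\bigr)$, so its size is $|a-b|\,|a+b|\,N_{max}^2$, not $N_{max}^3|a-b|$; in particular it vanishes when $a\approx -b$. This inverts your case analysis. In the $(+-)$ case $N_2\sim N_3\gg N_1$ one has $\xi_2\approx-\xi_3$, hence $|\xi_2-\xi_3|\sim N_{max}$ (not small, as you claim) and the loss comes from $|\xi_2+\xi_3|=|\xi_1|\sim N_{min}$, giving a Jacobian of size $N_{max}^3N_{min}$; dividing the tolerance $L_{med}$ by it is what produces the $\frac{N_{max}}{N_{min}}L_{med}$ term in (3.10), exactly as in case (iii) of the paper. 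Conversely, in the $(++)$ case your claim ``$|\xi_j-\xi_k|\sim N_{max}$, so $|\partial h|\sim N_{max}^4$, which directly gives (3.9)'' is unjustified: with $\xi_2\approx\xi_3\approx N$, $\xi_1\approx-2N$ all three conditions of the $(++)$ case hold while $p'(\xi_2)-p'(\xi_3)$ can vanish, so the set $\{\xi_2:\,p(\xi_2)+p(\xi-\xi_2)=\tau+O(L_{med})\}$ is not controlled by a first-derivative (Jacobian) bound there; near that critical point one only has the second-derivative information $|p''(\xi_2)+p''(\xi-\xi_2)|\sim N_{max}^3$ and a correspondingly weaker level-set count. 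This degenerate sub-case is the real crux of the $(++)$ estimate (the paper's own case (i) passes over it very quickly), and your argument simply asserts it away.

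The bookkeeping of your reduction is also off. The correct output of Tao's Corollary 4.2, as in (3.12), is $(3.4)\lesssim L_{min}^{1/2}\,\bigl|\{\xi_2:\,|\xi_2-\xi_2^0|\ll N_{min},\ p(\xi_2)+p(\xi-\xi_2)=\tau+O(L_{med})\}\bigr|^{1/2}$: only $L_{min}$ appears as a prefactor, and $L_{med}$ enters solely as the tolerance in the dispersion constraint. Your version $\|X\|^2\lesssim L_{min}\cdot L_{med}\cdot\min\bigl(N_{min},\,H/|\partial h|\bigr)$ double-counts a modulation factor and replaces the tolerance $L_{med}$ by $H$; in the generic regime $|\partial h|\sim N_{max}^4$ it evaluates to $L_{min}L_{med}N_{min}\sim L_{min}L_{med}HN_{max}^{-4}$, which exceeds the square of (3.11), namely $L_{min}N_{max}^{-4}\min(H,L_{med})$, by the factor $\max(H,L_{med})$ --- so it is not ``already the non-coherent estimate.'' Finally, (3.11) must also cover the high-modulation regime $L_{max}\sim L_{med}\gg H$, which your scheme (tied to the change of variables through $h$) never addresses; the paper disposes of it separately with the elementary bound $(3.4)\lesssim L_{min}^{1/2}N_{min}^{1/2}\lesssim L_{min}^{1/2}N_{max}^{-2}H^{1/2}$, using (3.8).
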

\begin{proof} The fundamental estimate on dyadic blocks for the Kawahara equation is new. We
prove it by using the tools Tao developed in \cite{Tao2001}.

In the high modulation case $L_{max}\sim L_{med}\gg H$ we have by an
elementary estimate employed by Tao (see (37) p.861 in
\cite{Tao2001})
$$(\ref{block})\lesssim L_{min}^{1/2}N_{min}^{1/2}\lesssim L_{min}^{1/2}N_{max}^{-2}
N_{min}^{1/2}N_{max}^2\lesssim L_{min}^{1/2}N_{max}^{-2}H^{1/2}.$$
For the low modulation case: $L_{max}\sim H$, by symmetry we may
assume that $L_1\ge L_2\ge L_3$.

By Corollary 4.2 in Tao's paper \cite{Tao2001}, we have
\begin{eqnarray}
(\ref{block})&\lesssim &L_3^{1/2}\Big |\{\xi_2:\,|\xi_2-\xi_2^0|\ll
N_{min};\,|\xi-\xi_2-\xi_3^0|\ll
N_{min};\nonumber\\&&\quad\quad\quad\quad
p(\xi_2)+p(\xi-\xi_2)=\tau+O(L_2)\}\Big |^{1/2}\label{RHD}
\end{eqnarray}
for some $\tau\in{\mathbf R},\,\xi,\,\xi_1^0,\,\xi_2^0,\,\xi_3^0$
satisfying
$$|\xi_j^0|\sim N_j\,(j=1,\,2,\,3);\,|\xi_1^0+\xi_2^0+\xi_3^0|\ll
N_{min};\,|\xi+\xi_1^0|\ll N_{min}.$$ To estimate the right-hand
side of the expression (\ref {RHD}) we shall use the identity
\begin{eqnarray}\label{id1}
p(\xi_2)+p(\xi-\xi_2)=p(\xi)+q(\xi,\,\xi_2)\end{eqnarray}
 where
$$q(\xi,\,\eta)=5\beta\xi\eta(\xi-\eta)(\xi^2-\xi\eta+\eta^2)-3\alpha\xi\eta(\xi-\eta).$$

We need to consider three cases:\,$N_1\sim N_2\sim N_3,\,N_1\sim
N_2\gg N_3$ and $N_2\sim N_3\gg N_1$. The case $N_1\sim N_3\gg N_2$
follows by symmetry. By (\ref{id1}) and (\ref{RHD}), we have
\begin{eqnarray}\label{id2}
p(\xi)+(\xi-\xi_2)\Big(5\beta\xi\xi_2(\xi^2-\xi\xi_2+\xi_2^2)-3\alpha\xi\xi_2\Big)=\tau+O(L_2).
\end{eqnarray}
(i) If $N_1\sim N_2\sim N_3$, we see from (\ref{id2}) that $\xi_2$
variable is contained in one interval of length
$O(L_2N_{max}^{-4})$, and then
$$(\ref{block})\lesssim
L_3^{1/2}L_2^{1/2}N_{max}^{-2}=L_{min}^{1/2}L_{med}^{1/2}N_{max}^{-2},$$
so (3.9) follows.\\ (ii) If $N_1\sim N_2\gg N_3$, the same
computation as in the case (i) gives that
$$(\ref{block})\lesssim
L_{min}^{1/2}L_{med}^{1/2}N_{max}^{-2}.$$ (iii) If $N_2\sim N_3\gg
N_1$, we see from (\ref{id2}) that $\xi_2$ variable is contained in
one interval of length $O(L_2N_{max}^{-3}N_{min}^{-1})$, and then
$$(\ref{block})\lesssim
L_3^{1/2}L_2^{1/2}N_{min}^{-1/2}N_{max}^{-3/2}=
L_{min}^{1/2}L_{med}^{1/2}N_{min}^{-1/2}N_{max}^{-3/2}.$$ But
$\xi_2$ is also contained in an interval of length $\ll N_{min}$.
The claim (3.10) follows.

\end{proof}

\section{Proof of Proposition \ref{nonl}}
\setcounter{equation}{0} \label{sec:4}

This section is devoted to the proof of Proposition \ref{nonl} with
the fundamental estimate on dyadic blocks in Lemma \ref{fundamental
estimate on dyadic blocks}.

\begin{proof} By Plancherel it suffices to show that
\begin{eqnarray}\label{norm1}
\left\|\frac{(\xi_1+\xi_2)<\xi_1>^{-s}<\xi_2>^{-s}<\xi_3>^{s}}{<\tau_1-p(\xi_1)>^{b}
<\tau_2-p(\xi_2)>^{b}<\tau_3-p(\xi_3)>^{1-b}}\right\|_{[3,\,{\Bbb
R}\times{\Bbb R}]}\lesssim 1.
\end{eqnarray}
By dyadic decomposition of the variables $\xi_j,\,\lambda_j\,
(j=1,2,3),\,h(\xi)$, we may assume that $|\xi_j|\sim
N_j,\,|\lambda_j|\sim L_j\,(j=1,2,3),\,|h(\xi)|\sim H$. By the
translation invariance of the $[k;\,Z]$-multiplier norm, we can
always restrict our estimate on $L_j\gtrsim 1\,(j=1,\,2,\,3)$ and
$\max(N_1,N_2,N_3)\gtrsim 1$. The comparison principle and
orthogonality (see Schur's test in \cite{Tao2001},\,p851) reduce the
multiplier norm estimate (\ref{norm1}) to showing that
\begin{eqnarray}\label{id3}
\sum_{N_{max}\sim N_{med}\sim N}\sum_{L_1,\,L_2,\,L_3\gtrsim
1}&&\frac{N_3<N_3>^{s}}{<N_1>^s<N_2>^sL_1^bL_2^bL_3^{1-b}}\nonumber\\
&&\left\|X_{N_1,\,N_2,\,N_3;\,L_{max};\,L_1,\,L_2,\,L_3}\right\|_{[3;\,{\Bbb
R}\times{\Bbb R}]}\lesssim 1
\end{eqnarray}
and
\begin{eqnarray}\label{id4}
\sum_{N_{max}\sim N_{med}\sim N}\sum_{L_{max}\sim L_{med}}\sum_{H\ll
L_{max}}&&\frac{N_3<N_3>^{s}}{<N_1>^s<N_2>^sL_1^bL_2^bL_3^{1-b}}
\nonumber\\
&&\left\|X_{N_1,\,N_2,\,N_3;\,H;\,L_1,\,L_2,\,L_3}\right\|_{[3;\,{\Bbb
R}\times{\Bbb R}]}\lesssim 1
\end{eqnarray}
for all $N\gtrsim 1$. Estimates (\ref{id3}) and (\ref{id4}) will be
accomplished by the fundamental estimate Lemma \ref{fundamental
estimate on dyadic blocks} and some delicate summation.

Fix $N\gtrsim 1$. This implies (3.8). We first prove (\ref{id4}). By
(\ref{estimate3}) we reduce to
\begin{eqnarray}\label{id5}
&&\sum_{N_{max}\sim N_{med}\sim N}\sum_{L_{max}\sim L_{med}\gtrsim
N^4N_{min}} \frac{N_3<N_3>^{s}}{<N_1>^s<N_2>^sL_1^bL_2^bL_3^{1-b}}
L_{min}^{1/2}N_{min}^{1/2}\lesssim 1.
\end{eqnarray}
By symmetry we only need to consider two cases: $N_1\sim N_2\sim
N,\,N_3=N_{min}$ and $N_1\sim N_3\sim N,\,N_2=N_{min}$.

(i) In the first case $N_1\sim N_2\sim N,\,N_3=N_{min}$, the
estimate (\ref{id5}) can be further reduced to
\begin{eqnarray*}
&&\sum_{N_{max}\sim N_{med}\sim N}\sum_{L_{max}\sim L_{med}\gtrsim
N^4N_{min}}\frac{N^{-2s}N_{min}<N_{min}>^s}{L_{min}^bL_{med}^bL_{max}^{1-b}}
L_{min}^{1/2}N_{min}^{1/2}\lesssim 1,
\end{eqnarray*}
then performing the $L$ summations, we reduce to
\begin{eqnarray*}
\sum_{N_{max}\sim N_{med}\sim
N}\frac{N^{-2s}N_{min}^{3/2}<N_{min}>^s}{N^{4}N_{min}}\lesssim 1,
\end{eqnarray*}
which is true if $4+2s>0$. So, (\ref{id5}) is true if $s>-2$.

(ii) In the second case $N_1\sim N_3\sim N,\,N_2=N_{min}$, the
estimate (\ref{id5}) can be reduced to
\begin{eqnarray*}
&&\sum_{N_{max}\sim N_{med}\sim N}\sum_{L_{max}\sim L_{med}\gtrsim
N^4N_{min}}\frac{N}{<N_{min}>^sL_{min}^bL_{med}^bL_{max}^{1-b}}
L_{min}^{1/2}N_{min}^{1/2}\lesssim 1.
\end{eqnarray*}
Before performing the $L$ summations, we need pay a little more
attention to the summation of $N_{min}$. So we reduce to
\begin{eqnarray*}
&&\sum_{N_{max}\sim N_{med}\sim N,\,N_{min}\leq 1}\sum_{L_{max}\sim
L_{med}\gtrsim
N^4N_{min}}\frac{NN_{min}^{1/2}}{L_{min}^{b-1/2}L_{max}^{1/2}}+\\
&&\sum_{N_{max}\sim N_{med}\sim N,\,1\leq N_{min}\leq
N}\sum_{L_{max}\sim L_{med}\gtrsim
N^4N_{min}}\frac{NN_{min}^{1/2-s}}{L_{min}^{b-1/2}L_{max}}\lesssim
1,
\end{eqnarray*}
which is obviously true if $s> -\frac 72$. So, (\ref{id5}) is true
if $s>-\frac 72$.

Now we show the low modulation case (\ref{id3}). In this case
$L_{max}\sim N_{max}^4N_{min}$. We first deal with the contribution
where (\ref{estimate1}) holds. In this case we have
$N_1,\,N_2,\,N_3\sim N\gtrsim 1$, so we reduce to
\begin{eqnarray}\label{id6}
\sum_{L_{max}\sim
N^5}\frac{N^{-s}N}{L_{min}^bL_{med}^bL_{max}^{1-b}}
L_{min}^{1/2}N^{-2}L_{med}^{1/2}\lesssim 1.
\end{eqnarray}
Performing the $L$ summations, we reduce to
$$\frac{1}{N^{1+s}N^{5(1-b)}}\lesssim 1,$$
which is true if $1+s+5(1-b)>0$. So, (\ref{id6}) is true if
$s>-\frac 72$ and $\frac 12<b<\frac{6+s}5$.

Now we deal with the cases where (\ref{estimate2}) holds. By
symmetry we only need to consider two cases
$$\begin{array}{ll}
N\sim N_1\sim N_2\gg N_3;\,& H\sim L_3\gtrsim L_1,\,L_2\\
N\sim N_2\sim N_3\gg N_1;\,& H\sim L_1\gtrsim L_2,\,L_3
\end{array}.$$

In the first case we reduce by (\ref{estimate2}) to
\begin{eqnarray}\label{id7}
&&\sum_{N_3\ll N}\sum_{1\lesssim L_1,\,L_2\lesssim
N^4N_3}\frac{N_3<N_3>^s}{N^{2s}L_1^bL_2^bL_3^{1-b}}
L_{min}^{1/2}N^{-2}\min\Big(N^4N_3,\,\frac{N}{N_3}L_{med}\Big)^{1/2}\lesssim
1.
\end{eqnarray}
Decompose the left-hand side of (\ref{id7}) into the following two
terms:
\begin{eqnarray*}\label{id8}
&&\quad\quad\sum_{N_3\leq 1}\sum_{1\lesssim L_1,\,L_2\lesssim
N^4N_3}\frac{N_3<N_3>^s}{N^{2s}L_1^bL_2^bL_3^{1-b}}
L_{min}^{1/2}N^{-2}\min\Big(N^4N_3,\,\frac{N}{N_3}L_{med}\Big)^{1/2}\\
&&+\sum_{1<N_3\ll N}\sum_{1\lesssim L_1,\,L_2\lesssim
N^4N_3}\frac{N_3<N_3>^s}{N^{2s}L_1^bL_2^bL_3^{1-b}}
L_{min}^{1/2}N^{-2}\min\Big(N^4N_3,\,\frac{N}{N_3}L_{med}\Big)^{1/2}\\
&&=:I_1+I_2.
\end{eqnarray*}
We estimate the above two terms separately.

We first consider the estimate of $I_1$. If $N^4N_3=\frac
N{N_3}L_{med}$, then $N_3=\Big(\frac{L_{med}}{N^3}\Big)^{1/2}$. We
divide two cases: $\Big(\frac{L_{med}}{N^3}\Big)^{1/2}\geq 1$ and
$\Big(\frac{L_{med}}{N^3}\Big)^{1/2}<1$ to estimate $I_1$. When
$\Big(\frac{L_{med}}{N^3}\Big)^{1/2}\geq 1$,
\begin{eqnarray}\label{id9}
I_1&\leq &\sum_{N_3\leq 1}\sum_{1\lesssim L_1,\,L_2\lesssim
N^4N_3}\frac{N_3}{N^{2s}L_{min}^{b-1/2}L_{med}^b(N^4N_3)^{1-b}}
N^{-2}N^2N_3^{1/2}.
\end{eqnarray}
Performing the $N_3$ summation in (\ref{id9}), we have
\begin{eqnarray*}
I_1&\lesssim & \sum_{\substack{1\lesssim L_1,\,L_2\lesssim
N^4\\L_{med}\geq
N^3}}\frac{1}{N^{2s}L_{min}^{b-1/2}L_{med}^{b}N^{4(1-b)}}\lesssim 1
\end{eqnarray*}
if $2s+4-b>0$. That means that $I_1\lesssim 1$ if $s>-\frac 74$ and
$\frac 12<b< 4+2s$.

When $\Big(\frac{L_{med}}{N^3}\Big)^{1/2}<1$,
\begin{eqnarray}\label{id10}
&&I_1\leq \sum_{N_3<(\frac{L_{med}}{N^3})^{1/2}}\sum_{1\lesssim
L_1,\,L_2\lesssim
N^4N_3}\frac{N_3}{N^{2s}L_{min}^{b-1/2}L_{med}^b(N^4N_3)^{1-b}}
N^{-2}N^2N_3^{1/2}\nonumber\\
&& +\quad\sum_{(\frac{L_{med}}{N^3})^{1/2}<N_3<1}\sum_{1\lesssim
L_1,\,L_2\lesssim
N^4N_3}\frac{N_3}{N^{2s}L_{min}^{b-1/2}L_{med}^b(N^4N_3)^{1-b}}
N^{-2}N^{1/2}L_{med}^{1/2}N_3^{-1/2}.\nonumber
\end{eqnarray}
Performing the $N_3$ summation, we have
\begin{eqnarray*}
I_1&\lesssim & \sum_{1\lesssim L_1,\,L_2\lesssim
N^4}\frac{\Big(\frac{L_{med}}{N^3}\Big)^{\frac 12(\frac
12+b)}}{N^{2s}L_{min}^{b-1/2}L_{med}^{b}N^{4(1-b)}}+\sum_{1\lesssim
L_1,\,L_2\lesssim N^4}\frac{N^{-\frac
32}}{N^{2s}L_{min}^{b-1/2}L_{med}^{b-1/2}N^{4(1-b)}}\\
&\lesssim &\sum_{1\lesssim L_1,\,L_2\lesssim N^4}\frac{1}
{N^{2s}L_{min}^{b-1/2}L_{med}^{\frac b2-\frac 14}N^{4(1-b)}N^{\frac
32(\frac 12+b)}}+\sum_{1\lesssim L_1,\,L_2\lesssim
N^4}\frac{N^{-\frac
32}}{N^{2s}L_{min}^{b-1/2}L_{med}^{b-1/2}N^{4(1-b)}}\\
&=:&I_{11}+I_{12}.
\end{eqnarray*}
Performing the $L$ summations, we see that $I_{11}\lesssim 1$ if
$2s+4(1-b)+\frac 32(\frac 12+b)>0$ and $I_{12}\lesssim 1$ if
$2s+4(1-b)+\frac 32>0$. This implies that $I_1\lesssim 1$ if
$s>-\frac 74$ and $\frac 12<b<\frac {4s+11}8$.

Now we consider the estimate of the second term $I_2$. The estimate
is a little simple compared to the estimate of $I_1$. We do not need
distinguish the cases $\Big(\frac{L_{med}}{N^3}\Big)^{1/2}\geq 1$
and $\Big(\frac{L_{med}}{N^3}\Big)^{1/2}<1$. We could get the
following estimate for $I_2$ in a unified way
\begin{eqnarray}\label{id11}
I_2&\leq &\sum_{1<N_3\ll N}\sum_{1\lesssim L_1,\,L_2\lesssim
N^4N_3}\frac{N_3N_3^s}{N^{2s}L_{min}^{b-1/2}L_{med}^b(N^4N_3)^{1-b}}
N^{-2}N^{1/2}L_{med}^{1/2}N_3^{-1/2}.
\end{eqnarray}
Performing the $N_3$ summation in (\ref{id11}) and noticing that if
$s-\frac 12+b<0$, we have
\begin{eqnarray*}
I_2&\lesssim & \sum_{1\lesssim L_1,\,L_2\lesssim
N^5}\frac{1}{N^{2s}L_{min}^{b-1/2}L_{med}^{b-1/2}N^{4(1-b)}N^{3/2}}\lesssim
1
\end{eqnarray*}
under condition that $2s+4(1-b)+\frac 32>0$. If $s-\frac 12+b\ge 0$,
we have
\begin{eqnarray*}
I_2&\lesssim & \sum_{1\lesssim L_1,\,L_2\lesssim
N^5}\frac{N^{s-1/2+b}}{N^{2s}L_{min}^{b-1/2}L_{med}^{b-1/2}N^{4(1-b)}N^{3/2}}\lesssim
1
\end{eqnarray*}
under condition that $2s+4(1-b)+\frac 32>s-\frac 12+b$. That means
that $I_2\lesssim 1$ if $s>-\frac 74$ and $\frac
12<b<\min\Big(\frac{4s+11}8,\,\frac{s+6}5\Big)$. Combining the
estimates for $I_1$ and $I_2$, we obtain the desired estimate
(\ref{id7}).

Now we deal with the case $N\sim N_2\sim N_3\gg N_1;\, H\sim
L_1\gtrsim L_2,\,L_3$. In this case we reduce by (\ref{estimate2})
to
\begin{eqnarray}\label{id12}
&&\sum_{N_1\ll N}\sum_{1\lesssim L_2,\,L_3\lesssim
N^4N_1}\frac{N^{1+s}L_{min}^{1/2}}{N^{s}<N_1>^sL_2^bL_3^{1-b}(N^4N_1)^b}
\min\Big(H,\,\frac N{N_1}L_{med}\Big)^{1/2}\lesssim 1.
\end{eqnarray}
Decompose the left-hand side of (\ref{id12}) into the following two
terms:
\begin{eqnarray*}\label{id13}
&&\sum_{N_1\leq 1}\sum_{1\lesssim L_2,\,L_3\lesssim
N^4N_1}\frac{N^{1+s}}{N^{s}L_2^bL_3^{1-b}(N^4N_1)^b}
L_{min}^{1/2}N_1^{1/2}+\\
&&\sum_{1<N_1\ll N}\sum_{1\lesssim L_2,\,L_3\lesssim
N^4N_1}\frac{N^{1+s}}{N_1^sN^{s}L_2^bL_3^{1-b}(N^4N_1)^b}
L_{min}^{1/2}N^{-2}N^{5/4}L_{med}^{1/4}=:J_1+J_2.
\end{eqnarray*}
In $J_1$, we assume $N_1\gtrsim N^{-4}$, otherwise the summation of
$L$ vanishes. Performing the summation of $L$, we get
\begin{eqnarray*}\label{id14}
J_1\lesssim \sum_{N^{-4}\lesssim N_1\leq 1}\frac{NN_1^{\frac
12-b}}{N^{4b}}\lesssim\frac{NN^{(-4)(\frac 12-b)}}{N^{4b}}\lesssim
1.
\end{eqnarray*}
If we take $\frac 12<b<\frac 34$ in $J_2$, then performing the
summation of $L$ implies that
\begin{eqnarray*}\label{id15}
J_2\lesssim \sum_{1\le N_1\ll
N}\frac{N^{1/4}N_1^{-s-b}}{N^{4b}}\lesssim\frac{N^{1/4}N^{-s-b}}{N^{4b}}\lesssim
1,
\end{eqnarray*}
if $4b+s+b-\frac 14>0$. The condition $4b+s+b-\frac 14>0$ is always
true if $s>-\frac 74$ and $\frac 12<b\leq 1$. So, $J_2\lesssim 1$ if
$s>-\frac 74$ and $\frac 12<b<\frac 34$. Combining the estimates for
$J_1$ and $J_2$, we get the needed estimate (\ref{id12}).

To finish the proof of (\ref{id3}) it remains to deal with the cases
where (\ref{estimate3}) holds. This reduces to
\begin{eqnarray}\label{id16}
&&\sum_{N_{max}\sim N_{med}\sim N}\sum_{L_{max}\sim N^4N_{min}}\nonumber\\
&&\quad\quad\frac{N_3<N_3>^{s}}{<N_1>^s<N_2>^sL_1^{b}
L_2^{b}L_3^{1-b}}
L_{min}^{1/2}N^{-2}\min{(H,\,L_{med})}^{1/2}\lesssim 1.
\end{eqnarray}
To estimate (\ref{id16}), by symmetry we need to consider two cases:
$N_1\sim N_2\sim N,\,N_3=N_{min}$ and $N_1\sim N_3\sim
N,\,N_2=N_{min}$.

(i) When $N_1\sim N_2\sim N,\,N_3=N_{min}$, the estimate
(\ref{id16}) further reduces to
\begin{eqnarray*}
&&\sum_{\substack{N_1\sim N_2\sim N\\N_3\ll N}}\sum_{L_{max}\sim
N^4N_3}\frac{N_3<N_3>^s}{N^{2s}L_{min}^bL_{med}^b(N^4N_3)^{1-b}}
L_{min}^{1/2}N^{-2}L_{med}^{1/2}\lesssim 1,
\end{eqnarray*}
then performing the $L$ summations, we reduce to
\begin{eqnarray*}
\sum_{N_3\ll
N}\frac{N_3<N_3>^s}{N^{2+2s}N^{4(1-b)}N_3^{1-b}}\lesssim 1,
\end{eqnarray*}
which is true if $2+2s+4(1-b)>0$. So, (\ref{id16}) is true if $s>-2$
and $\frac 12<b<\frac{s+3}2$.

(ii) When $N_1\sim N_3\sim N,\,N_2=N_{min}$, the estimate
(\ref{id16}) can be reduced to
\begin{eqnarray}\label{id17}
&&\sum_{\substack{N_1\sim N_3\sim N\\N_2\ll N}}\sum_{L_{max}\sim
N^4N_2}\frac{N^{1+s}L_{min}^{1/2}N^{-2}}{N^s<N_2>^sL_{min}^bL_{med}^bL_{max}^{1-b}}
\min(H,\,L_{med})^{1/2}\lesssim 1.
\end{eqnarray}
Before performing the $L$ summations, as before we need pay a little
more attention to the summation of $N_2$. Decompose the left-hand
side of (\ref{id17}) into the following two terms:
\begin{eqnarray*}
&&\quad\quad\sum_{N_2\leq 1}\sum_{L_{max}\sim
N^4N_2}\frac{N}{L_{min}^{b-1/2}L_{med}^b
L_{max}^{1-b}}N^{-2}L_{med}^{1/4}(N^4N_2)^{1/4}\\
&&+\sum_{1\leq N_2\leq N}\sum_{L_{max}\sim
N^4N_2}\frac{N}{N_2^sL_{min}^{b-1/2}L_{med}^bL_{max}^{(1-b)}}N^{-2}L_{med}^{1/2}\\
&&=:J_3+J_4.
\end{eqnarray*}
It is easily seen that $J_3\lesssim 1$ for any $\frac 12<b\le 1$.
For $J_4$, if $s+1-b\ge 0$, we always have $J_4\lesssim 1$ for any
$\frac 12<b\le 1$. If $s+1-b<0$, we have $J_4\lesssim 1$ under
condition that $4(1-b)+s+1+(1-b)>0$. So, (\ref{id17}) is true if
$s>-\frac 72$ and $\frac 12<b<\frac{s+6}5$. This completes the proof
of Proposition (\ref{nonl}).
\end{proof}

\section{A trilinear estimate and local well-posedness of the modified Kawahara equation}
\setcounter{equation}{0} \label{sec:5}

In this section we will prove a trilinear estimate in Bourgain
spaces, from which and the linear estimates presented in Section 2,
the local well-posedness of the initial-value problem for the
modified Kawahara equation Theorem\ref{major2} could be derived.

\begin{lemma}\label{trilinear} \, Let $s\ge -\frac 14$. For all $u_1,\,u_2,\,u_3$ on ${\Bbb
R}\times {\Bbb R}$ and $\frac 12<b\le 1$, we have
\begin{eqnarray}
\|\partial_x(u_1u_2u_3)\|_{X_{s,\,b-1}}\lesssim
\|u_1\|_{X_{s,\,b}}\|u_2\|_{X_{s,\,b}}\|u_3\|_{X_{s,\,b}}.
\end{eqnarray}
\end{lemma}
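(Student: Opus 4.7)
The plan is to mirror the proof of Proposition \ref{nonl} within Tao's $[k;Z]$-multiplier framework, now with $k=4$. By Plancherel, the lemma is equivalent to the $[4;\mathbf{R}\times\mathbf{R}]$-multiplier norm estimate
\begin{equation*}
\left\|\frac{|\xi_4|\langle\xi_4\rangle^{s}}{\langle\xi_1\rangle^{s}\langle\xi_2\rangle^{s}\langle\xi_3\rangle^{s}\langle\lambda_1\rangle^{b}\langle\lambda_2\rangle^{b}\langle\lambda_3\rangle^{b}\langle\lambda_4\rangle^{1-b}}\right\|_{[4;\mathbf{R}\times\mathbf{R}]}\lesssim 1
\end{equation*}
on the hyperplane $\xi_1+\xi_2+\xi_3+\xi_4=0$, $\tau_1+\cdots+\tau_4=0$. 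I then decompose $|\xi_j|\sim N_j$, $|\lambda_j|\sim L_j$, and the four-wave resonance $h_4(\xi):=\sum_{j=1}^{4}p(\xi_j)$ with $|h_4|\sim H$. The identities $\sum\xi_j=0$ and $\sum\lambda_j+h_4=0$ force $N_{\max}\sim N_{\mathrm{med}}$ and $L_{\max}\sim\max(L_{\mathrm{med}},H)$, reducing the task to summing $[4;Z]$ dyadic block bounds over all admissible $(N_j,L_j,H)$.

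The decisive step is to dominate each $[4;Z]$ dyadic block by a product of two $[3;Z]$ dyadic blocks, to which Lemma \ref{fundamental estimate on dyadic blocks} applies directly. Picking a pairing $\{i,j\}\subset\{1,2,3,4\}$, I introduce the intermediate variable $(\xi_{ij},\tau_{ij}):=(\xi_i+\xi_j,\tau_i+\tau_j)$, rewrite the four-fold weighted convolution as an $L^2_{\xi_{ij},\tau_{ij}}$ inner product between the $(i,j)$-pair and its complement, and apply Cauchy--Schwarz. This yields a product of two three-wave Kawahara $[3;Z]$-multiplier norms, each with its own auxiliary resonance, controlled case by case by the $(++)$-coherence, $(+-)$-coherence, or generic bound in Lemma \ref{fundamental estimate on dyadic blocks}.

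The proof then becomes a case analysis on the relative sizes of $N_1,\ldots,N_4$, parallel to but richer than cases (i)--(ii) inside Proposition \ref{nonl}. The pairing $\{i,j\}$ is chosen configuration by configuration to extract the best algebraic gain. Two representative scenarios drive the analysis: (a) \emph{high$\times$high$\to$low}, where two large frequencies nearly cancel and $|\xi_4|\ll N$; and (b) \emph{high$\times$high$\to$high}, where all four frequencies are comparable to $N$. In each scenario I first sum in $L_1,\ldots,L_4$ using $L_{\max}\sim\max(L_{\mathrm{med}},H)$ and the $\min(H,L_{\mathrm{med}})$ gains, then sum in the dyadic frequencies with $b$ chosen slightly above $1/2$.

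The main obstacle is tracking the exponents through this multi-layered summation in the most resonant sub-case, where I expect the threshold $s=-\frac14$ to be sharp. Concretely, it should appear as the exact $N$-summability condition in a single bottleneck sub-case of configuration (a), in which the $L$-summations provide only borderline decay and the remaining geometric series over $N$ converges precisely when $s\geq-\frac14$. A secondary technical point will be to verify that a single $b\in(1/2,1)$ works across all sub-cases, as was achieved in Proposition \ref{nonl}; the flexibility in Lemma \ref{fundamental estimate on dyadic blocks} should make this routine once the case analysis is laid out.
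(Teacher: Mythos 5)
Your overall strategy---pair two of the four factors, split the weighted convolution at the intermediate variable $(\xi_{ij},\tau_{ij})$ by Cauchy--Schwarz (Tao's composition/$TT^*$ mechanism), and control the resulting three-wave objects by Lemma \ref{fundamental estimate on dyadic blocks}---is the same skeleton as the paper's proof, which reduces the trilinear bound via the Leibniz-type inequality $\langle\xi_4\rangle^{s+1}\lesssim\langle\xi_4\rangle^{1/2}\sum_{j=1}^3\langle\xi_j\rangle^{s+1/2}$, the replacement of one $\langle\tau-p(\xi)\rangle^{b}$ by $\langle\tau-p(\xi)\rangle^{1-b}$, and the $TT^*$ identity, to the single bilinear estimate of Lemma 5.2, $\|uv\|_{L^2}\lesssim\|u\|_{X_{-1/2,\,1/2-\epsilon}}\|v\|_{X_{s,\,1/2+\epsilon}}$, which is then proved with the dyadic block machinery. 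However, as written your plan has genuine gaps. First, the quadrilinear decomposition in the four-wave resonance $h_4(\xi)=\sum_{j=1}^{4}p(\xi_j)$ buys nothing and actually obstructs your key step: there is no analogue of Lemma 3.1 for $h_4$ (it vanishes identically on configurations such as $\xi_2=-\xi_1$, $\xi_4=-\xi_3$ with all frequencies large), and the cutoff $\chi_{|h_4|\sim H}$ does not factor across any pairing $\{i,j\}$, so a fixed $[4;Z]$ block cannot be dominated by a product of two $[3;Z]$ blocks in the form you state.

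Second, and more seriously, after the Cauchy--Schwarz splitting the intermediate variable enters each three-wave form with no weight and with no localization of $|\tau_{ij}-p(\xi_{ij})|$ or of that half's own three-wave resonance, so Lemma \ref{fundamental estimate on dyadic blocks} does not ``apply directly'': you must further decompose the intermediate frequency and modulation and then prove that these extra, completely unweighted dyadic sums converge. That is precisely the content the paper isolates as Lemma 5.2 (a $[3;Z]$ bound whose third variable carries no weight at all), and it is there that the threshold $s\ge-\frac14$ actually emerges, in the $(+-)$-coherence case $N\sim N_1\sim N_2\gg N_3$, $H\sim L_3$. Your proposal defers exactly this computation (``tracking the exponents''), so the quantitative heart of the argument---including the verification that a single $b\in(\frac12,1)$ works and that the borderline $N$- and $L$-sums close at $s=-\frac14$---is missing. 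The cleaner route, and the paper's, is to factor the full multiplier before any block decomposition: distribute the derivative with the Leibniz-type inequality so each half of the pairing carries weights $\langle\xi\rangle^{-s}$ and $\langle\xi\rangle^{1/2}$ with modulation exponents $b$ and $1-b$, apply the $TT^*$ identity once, and then run the dyadic summation a single time on the resulting bilinear estimate.
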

\noindent This is the first trilinear estimate in Bourgain spaces
associated to the class of Kawahara equations. It seems difficult to
obtain this kind of trilinear estimates by the method firstly
presented by Bourgain, Kenig-Ponce-Vega for KdV. We reduce the
trilinear estimate by the $TT^*$ identity Tao developed in
\cite{Tao2001} to a bilinear estimate, then prove the bilinear
estimate by the fundamental estimate on dyadic blocks in Lemma
\ref{fundamental estimate on dyadic blocks}.

\begin{proof}\, By duality and Plancherel it suffices to show that
\begin{eqnarray*}
\left\|\frac{(\xi_1+\xi_2+\xi_3)<\xi_4>^s}{<\tau_4-p(\xi_4)>^{1-b}
\prod_{j=1}^3<\xi_j>^s<\tau_j-p(\xi_j)>^b}\right\|_{[4,\,{\Bbb
R}\times{\Bbb R}]}\lesssim 1.
\end{eqnarray*}
We estimate $|\xi_1+\xi_2+\xi_3|$ by $<\xi_4>$. Applying the
fractional Leibnitz rule, we have
$$<\xi_4>^{s+1}\lesssim <\xi_4>^{1/2}\sum_{j=1}^3<\xi_j>^{s+1/2}$$
where we assume $s>-1/2$, and symmetry to reduce to
\begin{eqnarray*}
\left\|\frac{<\xi_1>^{-s}<\xi_3>^{-s}<\xi_2>^{1/2}<\xi_4>^{1/2}}{<\tau_4-p(\xi_4)>^{1-b}
\prod_{j=1}^3<\tau_j-p(\xi_j)>^{b}}\right\|_{[4,\,{\Bbb
R}\times{\Bbb R}]}\lesssim 1.
\end{eqnarray*}
We may replace $<\tau_2-p(\xi_2)>^{b}$ by $<\tau_2-p(\xi_2)>^{1-b}$
(this is true for any $b\ge \frac12$). By the $TT^*$ identity (see
Lemma 3.7 in \cite{Tao2001}, p847), the estimate is reduced to the
following bilinear estimate.
\end{proof}

\begin{lemma}(Bilinear estimate).\, Let $s\ge -\frac 14$. For all $u,\,v$ on ${\Bbb
R}\times {\Bbb R}$ and $0<\epsilon\ll 1$, we have
\begin{eqnarray}
\|uv\|_{L^2({\Bbb R}\times{\Bbb R})}\lesssim
\|u\|_{X_{-1/2,\,1/2-\epsilon}({\Bbb R}\times{\Bbb R})
}\|v\|_{X_{s,\,1/2+\epsilon}({\Bbb R}\times{\Bbb R})}.
\end{eqnarray}
\end{lemma}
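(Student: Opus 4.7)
The plan is to reduce the bilinear $L^2$ estimate to a three-linear form on the Fourier side, interpret it as a $[3;\mathbf{R}\times\mathbf{R}]$-multiplier norm bound, and then sum the dyadic block estimates of Lemma \ref{fundamental estimate on dyadic blocks}. First, by Plancherel and duality against a test function $w\in L^2(\mathbf{R}\times\mathbf{R})$, the desired inequality is equivalent to
$$
\left\|\frac{\langle\xi_1\rangle^{1/2}\langle\xi_2\rangle^{-s}}{\langle\tau_1-p(\xi_1)\rangle^{1/2-\epsilon}\langle\tau_2-p(\xi_2)\rangle^{1/2+\epsilon}}\right\|_{[3;\mathbf{R}\times\mathbf{R}]}\lesssim 1,
$$
with no weight on the third factor since $w$ only sits in $L^2$. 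This is the key structural observation, and the asymmetric modulation exponents $1/2\pm\epsilon$ indicate that the $L$-summations will produce a small decay factor of $L_{max}^{-\epsilon}$, which is precisely what will absorb the borderline logarithmic divergences at the $b=1/2$ endpoint.

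Next I would dyadically decompose all the variables ($|\xi_j|\sim N_j$, $|\lambda_j|\sim L_j$, $|h(\xi)|\sim H$) and use translation invariance together with Schur's test to reduce matters to summing
$$
\sum \frac{N_1^{1/2}\langle N_2\rangle^{-s}}{L_1^{1/2-\epsilon}L_2^{1/2+\epsilon}}\,\|X_{N_1,N_2,N_3;H;L_1,L_2,L_3}\|_{[3;\mathbf{R}\times\mathbf{R}]}\lesssim 1,
$$
where the block norms are supplied by Lemma \ref{fundamental estimate on dyadic blocks}. The constraint $N_{max}\sim N_{med}$ from $\xi_1+\xi_2+\xi_3=0$ and the resonance identity $H\sim N_{max}^4 N_{min}$ from Lemma 3.1 in the low-modulation regime drastically restrict the range of summation. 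One then splits into the high-modulation regime $L_{max}\sim L_{med}\gg H$ and the three low-modulation regimes of Lemma \ref{fundamental estimate on dyadic blocks}: coherence (++), coherence (+-), and the generic case. In each regime one further subdivides according to which of $L_1,L_2,L_3$ is largest and which of $N_1,N_2,N_3$ is smallest (the weights no longer being symmetric in $j$).

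The hard step will be the (+-) coherence case $N_2\sim N_3\gg N_1$ at the endpoint $s=-1/4$, where the multiplier carries a growth factor of $N_{max}^{1/4}$ coming from $\langle\xi_2\rangle^{-s}$. There one must use the refined bound
$$
\|X_{N_1,N_2,N_3;H;L_1,L_2,L_3}\|_{[3;\mathbf{R}\times\mathbf{R}]}\lesssim L_{min}^{1/2}N_{max}^{-2}\min\bigl(H,\,(N_{max}/N_{min})L_{med}\bigr)^{1/2}
$$
and select the $(N_{max}/N_{min})L_{med}$ branch of the minimum in order to gain from small $N_{min}$. A careful interpolation between the two branches, combined with the small $\epsilon$-gain from the asymmetric $L$-weights, then shows that the $N_{min}$ summation converges precisely at $s=-1/4$; this is the origin of the endpoint sharpness mirroring the Kenig--Ponce--Vega regularity threshold for modified KdV.

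The remaining cases — the other permutations of the (+-) coherence, the (++) coherence where all $N_j$ are comparable, the generic low-modulation case, and the high-modulation case — are strictly less singular and can be handled by straightforward $L$- and $N$-summations in the same spirit as Section 4, with the same gain from the $\epsilon$-mismatch in the modulation weights. Once Lemma 5.2 is established, Lemma \ref{trilinear} follows by the reductions indicated in its proof, and the local well-posedness Theorem \ref{major2} is obtained by combining it with Proposition 2.1 in a standard contraction argument on $X_{s,b}$.
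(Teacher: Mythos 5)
Your overall strategy coincides with the paper's: Plancherel and duality reduce the estimate to a $[3;{\mathbf R}\times{\mathbf R}]$-multiplier bound for the weight $\langle\xi_1\rangle^{1/2}\langle\xi_2\rangle^{-s}\langle\lambda_1\rangle^{-(1/2-\epsilon)}\langle\lambda_2\rangle^{-(1/2+\epsilon)}$ (no weight on the dual factor), then dyadic decomposition, Schur's test, and summation of the block bounds of Lemma \ref{fundamental estimate on dyadic blocks}. The gap is in your case analysis: you attach the endpoint $s=-\frac14$ to the wrong configuration. In your labeling (index $1$ carries $\langle\xi_1\rangle^{1/2}$ and modulation weight $\tfrac12-\epsilon$, index $2$ carries $\langle\xi_2\rangle^{-s}$ and $\tfrac12+\epsilon$, index $3$ is unweighted), the case $N_2\sim N_3\gg N_1$ with $H\sim L_1$ is in fact easy: the largest modulation $L_1\sim N^4N_1$ sits under the weight $L_1^{1/2-\epsilon}$ and the numerator is only $\langle N_{min}\rangle^{1/2}N^{1/4}$, so even the crude branch $\min(\cdot)^{1/2}\le H^{1/2}$ closes it with much room to spare (the paper's corresponding permutations need only $s>-1$, respectively $s>-\frac32$). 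Moreover, your heuristic is backwards there: the branch $\frac{N_{max}}{N_{min}}L_{med}$ grows as $N_{min}\to0$; it is the branch $H\sim N_{max}^4N_{min}$ that is favorable for small $N_{min}$.

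The genuinely critical configuration, which you dismiss as ``strictly less singular'' and handleable by ``straightforward summations,'' is $N_1\sim N_2\sim N\gg N_3$ with $H\sim L_3\gtrsim L_1,L_2$: both weighted factors sit at high frequency, so the numerator is $N^{1/2-s}=N^{3/4}$ at $s=-\frac14$, while the largest modulation lies on the unweighted dual factor and gives no gain. There a one-branch argument fails: keeping only the $H$ branch, the $L$-sums are $O(1)$ and the $N_3$-sum gives $N^{3/4}\sum_{N_3\lesssim N}N_3^{1/2}\sim N^{5/4}$; keeping only the $\frac{N}{N_3}L_{med}$ branch, the sum over small $N_3\gtrsim N^{-4}$ of $N_3^{-1/2+\epsilon}$ again produces a positive power of $N$. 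One must balance the two branches, e.g. via $\min\bigl(N^4N_3,\tfrac{N}{N_3}L_{med}\bigr)^{1/2}\le N^{5/4}L_{med}^{1/4}$, which removes the $N_3$ dependence and yields precisely the condition $\tfrac12-s-2+\tfrac54\le0$, i.e. $s\ge-\frac14$; this is exactly the first $(+-)$ coherence case in the paper's proof, and it is where the stated threshold comes from. So the interpolation device you mention is the right one, but as written your plan applies it to a harmless case and leaves the decisive case unproved.
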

\noindent This lemma can be proved in the same way as Proposition
\ref{nonl} by using the fundamental estimate on dyadic blocks in
Lemma \ref{fundamental estimate on dyadic blocks}. But we should
point out that there is some differences between this lemma and
Proposition \ref{nonl}. Lemma 5.2 is an asymmetric bilinear estimate
while Proposition \ref{nonl} is a symmetric bilinear estimate. This
leads to the lack of some symmetry in the proof of Lemma 5.2. On the
other hand, since there is no derivative in the left-hand side of
(5.2), the proof of Lemma 5.2 is rather simpler than that of
Proposition \ref{nonl}.

\begin{proof} By Plancherel it suffices to prove that
\begin{eqnarray}\label{norm5}
\left\|\frac{<\xi_1>^{-s}<\xi_2>^{1/2}}{<\tau_1-p(\xi_1)>^{1/2+\epsilon}
<\tau_2-p(\xi_2)>^{1/2-\epsilon}}\right\|_{[3,\,{\Bbb R}\times{\Bbb
R}]}\lesssim 1.
\end{eqnarray}
By dyadic decomposition and orthogonality as in the proof of
Proposition \ref{nonl}, we reduce the multiplier norm estimate
(\ref{norm5}) to showing that
\begin{eqnarray}\label{id20}
\sum_{N_{max}\sim N_{med}\sim N}\sum_{L_1,\,L_2,\,L_3\gtrsim
1}&&\frac{<N_1>^{-s}<N_2>^{1/2}}{L_1^{1/2+\epsilon}L_2^{1/2-\epsilon}}\nonumber\\
&&\left\|X_{N_1,\,N_2,\,N_3;\,L_{max};\,L_1,\,L_2,\,L_3}\right\|_{[3;\,{\Bbb
R}\times{\Bbb R}]}\lesssim 1
\end{eqnarray}
and
\begin{eqnarray}\label{id21}
\sum_{N_{max}\sim N_{med}\sim N}\sum_{L_{max}\sim L_{med}}\sum_{H\ll
L_{max}}&&\frac{<N_1>^{-s}<N_2>^{1/2}}{L_1^{1/2+\epsilon}L_2^{1/2-\epsilon}}
\nonumber\\
&&\left\|X_{N_1,\,N_2,\,N_3;\,H;\,L_1,\,L_2,\,L_3}\right\|_{[3;\,{\Bbb
R}\times{\Bbb R}]}\lesssim 1
\end{eqnarray}
for all $N\gtrsim 1$.

Fix $N\gtrsim 1$. We first prove (\ref{id21}). By (\ref{estimate3})
we reduce to
\begin{eqnarray}\label{id22}
&&\sum_{N_{max}\sim N_{med}\sim N}\sum_{L_{max}\sim L_{med}\gtrsim
N^4N_{min}}\frac{<N_1>^{-s}<N_2>^{1/2}}{L_1^{1/2+\epsilon}L_2^{1/2-\epsilon}}
L_{min}^{1/2}N_{min}^{1/2}\lesssim 1.
\end{eqnarray}
We consider two cases: $s\geq 0$ and $s<0$.

(i) In the first case $s\geq 0$, the estimate (\ref{id22}) can be
further reduced to
\begin{eqnarray*}
&&\sum_{N_{max}\sim N_{med}\sim N}\sum_{L_{max}\sim L_{med}\gtrsim
N^4N_{min}}\frac{N^{1/2}<N_{min}>^{-s}}{L_{min}^{1/2+\epsilon}L_{med}^{1/2-\epsilon}}
L_{min}^{1/2}N_{min}^{1/2}\lesssim 1,
\end{eqnarray*}
then performing the $L$ summations, we reduce to
\begin{eqnarray*}
\sum_{N_{max}\sim N_{med}\sim
N}\frac{N^{1/2}N_{min}^{1/2}<N_{min}>^{-s}}{(N^{4}N_{min})^{1/2-\epsilon}}\lesssim
1,
\end{eqnarray*}
which is always true for $s\geq 0$.

(ii) In the second case $s<0$, the estimate (\ref{id22}) can be
reduced to
\begin{eqnarray*}
&&\sum_{N_{max}\sim N_{med}\sim N}\sum_{L_{max}\sim L_{med}\gtrsim
N^4N_{min}}\frac{N^{1/2-s}}{L_{min}^{1/2+\epsilon}(N^4N_{min})^{1/2-\epsilon}}
L_{min}^{1/2}N_{min}^{1/2}\lesssim 1.
\end{eqnarray*}
Performing the $L$ summations, we reduce to
\begin{eqnarray*}
&&\sum_{N_{max}\sim N_{med}\sim
N}\frac{N^{1/2-s}N_{min}^{\epsilon}}{N^{2-4\epsilon}}\lesssim 1,
\end{eqnarray*}
which is true if $s> -\frac 32$. So, (\ref{id22}) is true if
$s>-\frac 32$.

Now we show the low modulation case (\ref{id20}). In this case we
may assume $L_{max}\sim N_{max}^4N_{min}$. We first deal with the
contribution where (\ref{estimate1}) holds. In this case we have
$N_1,\,N_2,\,N_3\sim N\gtrsim 1$, so we reduce to
\begin{eqnarray}\label{id23}
\sum_{L_{max}\sim
N^5}\frac{N^{-s}N^{1/2}}{L_{min}^{1/2+\epsilon}L_{med}^{1/2-\epsilon}}
L_{min}^{1/2}N^{-2}L_{med}^{1/2}\lesssim 1.
\end{eqnarray}
Performing the $L$ summations, we reduce to
$$\frac{N^{5\epsilon}}{N^{3/2+s}}\lesssim 1,$$
which is true if $s>-\frac 32$.

Now we deal with the cases where (\ref{estimate2}) holds. Since the
lack of symmetry, we need to consider three cases
$$\begin{array}{ll}
N\sim N_1\sim N_2\gg N_3;\,& H\sim L_3\gtrsim L_1,\,L_2\\
N\sim N_2\sim N_3\gg N_1;\,& H\sim L_1\gtrsim L_2,\,L_3\\
N\sim N_1\sim N_3\gg N_2;\,& H\sim L_2\gtrsim L_1,\,L_3.
\end{array}$$

In the first case we reduce by (\ref{estimate2}) to
\begin{eqnarray}\label{id24}
&&\sum_{N_3\ll N}\sum_{1\lesssim L_1,\,L_2\lesssim
N^4N_3}\frac{N^{1/2-s}}{L_{min}^{1/2+\epsilon}L_{med}^{1/2-\epsilon}}
L_{min}^{1/2}N^{-2}\min\Big(N^4N_3,\,\frac{N}{N_3}L_{med}\Big)^{1/2}\lesssim
1.
\end{eqnarray}
Performing the $N_3$ summation we reduce to
\begin{eqnarray*}\label{id25}
&&\sum_{1\lesssim L_1,\,L_2\lesssim
N^4N_3}\frac{N^{1/2-2-s+5/4}}{L_{min}^{1/2+\epsilon}L_{med}^{1/2-\epsilon}}
L_{min}^{1/2}L_{med}^{1/4}\lesssim 1
\end{eqnarray*}
which is true if $s\geq -\frac 14$.

Now we deal with the second case $N\sim N_2\sim N_3\gg N_1;\, H\sim
L_1\gtrsim L_2,\,L_3$. In this case we make use of the first half of
(\ref{estimate2}) and reduce to
\begin{eqnarray}\label{id26}
&&\sum_{N_1\ll N}\sum_{1\lesssim L_2,\,L_3\lesssim
N^4N_1}\frac{N^{1/2}}{<N_1>^sL_2^{1/2-\epsilon}(N^4N_1)^{1/2+\epsilon}}
L_{min}^{1/2}N_1^{1/2}\lesssim 1
\end{eqnarray}
which is true if $s>-1$.

In the third case $N\sim N_1\sim N_3\gg N_2;\, H\sim L_2\gtrsim
L_1,\,L_3$, we similarly reduce by using the first half of
(\ref{estimate2}) to
\begin{eqnarray}\label{id26}
&&\sum_{N_2\ll N}\sum_{1\lesssim L_1,\,L_3\lesssim
N^4N_2}\frac{<N_2>^{1/2}}{N^sL_1^{1/2+\epsilon}(N^4N_2)^{1/2-\epsilon}}
L_{min}^{1/2}N_2^{1/2}\lesssim 1
\end{eqnarray}
which is true if $s>-\frac 32$.

To finish the proof of (\ref{id20}) it remains to deal with the
cases where (\ref{estimate3}) holds. This reduces to
\begin{eqnarray}\label{id27}
\sum_{N_{max}\sim N_{med}\sim N}\sum_{L_{max}\sim N^4N_{min}}
\frac{<N_2>^{1/2}}{<N_1>^sL_1^{1/2+\epsilon} L_2^{1/2-\epsilon}}
L_{min}^{1/2}N^{-2}L_{med}^{1/2}\lesssim 1
\end{eqnarray}
which is true if $s>-\frac 32$. This finishes the proof of Lemma
5.2.
\end{proof}

\vspace{.3in} \noindent{\bf Acknowledgements}: The authors would
like to express their gratitude to the anonymous referee and the
associated editor for their invaluable comments and suggestions
which helped improve the paper greatly. We also thank Justin Holmer
and Juan-Ming Yuan for discussions and for pointing to us some of
the references. W.\,Chen, J.\,Li and C.\,Miao are supported by the
NNSF of China (No.10725102, No.10771130, No.10626008).

\end{document}